\theoremstyle{plain}
\newtheorem{thm}{Theorem}[section] 
\newtheorem{cor}[thm]{Corollary}
\newtheorem{prop}[thm]{Proposition}
\newtheorem{lem}[thm]{Lemma}
\newtheorem*{mainthm}{Main Theorem}
\newtheorem*{mainconj}{Conjecture $\mathrm{B}_d$}
\newtheorem*{conjA}{Conjecture $\mathrm{A}_n$}
\newtheorem*{conjB}{Conjecture $\mathrm{B}_n$}
\theoremstyle{definition} 
\newtheorem{defn}[thm]{Definition}
\newtheorem{eg}[thm]{Example} 
\theoremstyle{remark}
\newtheorem{rem}[thm]{Remark}
\newtheorem*{cl}{Claim}
\newtheorem{cln}{Claim}
\newtheorem*{acknowledgement}{Acknowledgments}
\newcommand{\sO}{\mathcal{O}}
\newcommand{\Z}{\mathbb{Z}}
\newcommand{\Q}{\mathbb{Q}}
\newcommand{\m}{\mathfrak{m}}
\newcommand{\Hom}{\mathop{\mathrm{Hom}}\nolimits}
\newcommand{\Spec}{\mathop{\mathrm{Spec}}\nolimits}
\newcommand{\Supp}{\mathop{\mathrm{Supp}}\nolimits}
\newcommand{\bigzerol}{\smash{\lower1.0ex\hbox{\bg 0}}}
\renewcommand{\labelenumi}{\rm{(\theenumi)}}
\newfont{\bg}{cmr17 scaled\magstep5}
\title{On the $F$-purity of isolated log canonical singularities}
\author{Osamu Fujino}
\address{Department of Mathematics, Faculty of Science, Kyoto University, Kyoto 606-8502, Japan}
\email{fujino@math.kyoto-u.ac.jp}
\author{Shunsuke Takagi}
\address{Graduate School of Mathematical Sciences, University of Tokyo, 3-8-1 Komaba, Meguro-ku, Tokyo 153-8914, Japan}
\email{stakagi@ms.u-tokyo.ac.jp}
\keywords{log canonical singularities, $F$-pure singularities}
\subjclass[2010]{Primary 14B05; Secondary 13A35, 14E30}
\begin{document}

\begin{abstract}
A singularity in characteristic zero is said to be of \textit{dense $F$-pure type} if its modulo $p$ reduction is locally Frobenius split for infinitely many $p$. 
We prove that if $x \in X$ is an isolated log canonical singularity with $\mu(x \in X) \le 2$ (see Definition \ref{def1.4} for the definition of the invariant $\mu$), then it is of dense $F$-pure type. 
As a corollary, we prove the equivalence of log canonicity and being of dense $F$-pure type in the case of three-dimensional isolated $\Q$-Gorenstein normal singularities. 
\end{abstract}

\maketitle
\markboth{O.~FUJINO and S.~TAKAGI}{$F$-PURITY OF ISOLATED LOG CANONICAL SINGULARITIES}


\section*{Introduction}
A singularity in characteristic zero is said to be of dense $F$-pure type if its modulo $p$ reduction is locally Frobenius split for infinitely many $p$. 
The notion of strongly $F$-regular type is a variant of dense $F$-pure type and defined similarly using the Frobenius morphism after reduction to characteristic $p>0$ (see Definition \ref{dense Fpure def} for the precise definition).  
Recently it has turned out that they have a strong connection to singularities associated to the minimal model program. 
In particular, Hara \cite{Ha} proved that  a normal $\Q$-Gorenstein singularity in charscteristic zero is log terminal if and only if it is of strongly $F$-regular type. 
In this paper, as an analogous characterization for isolated log canonical singularities, we consider the following conjecture.
\begin{conjA}
Let $x \in X$ be an $n$-dimensional normal $\Q$-Gorenstein singularity defined over an algebraically closed field $k$ of characteristic zero such that $x$ is an isolated non-log-terminal point of $X$. 
Then $x \in X$ is log canonical if and only if it is of dense $F$-pure. 
\end{conjA}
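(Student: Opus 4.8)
\medskip
\noindent\textbf{Proposed approach.} The plan treats the two implications separately; the first is known and the second is the substantive one.

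\emph{Dense $F$-pure type $\Rightarrow$ log canonical.} I would not reprove this: it holds for an arbitrary normal $\Q$-Gorenstein singularity in characteristic zero (Hara--Watanabe), via the comparison between the non-$F$-pure ideal $\sigma(R_p)$ of a reduction modulo $p$ and the multiplier ideal $\J(X)$ computed on a log resolution. Dense $F$-pure type gives $\sigma(R_p)=R_p$ for infinitely many $p$, and letting $p\to\infty$ forces $\J(X)=\sO_X$ at $x$, i.e.\ log canonicity.

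\emph{Log canonical $\Rightarrow$ dense $F$-pure type.} First reduce to $K_X$ Cartier: if $r$ is the Gorenstein index and $\pi\colon(\widetilde X,\widetilde x)\to(X,x)$ the index-one cyclic cover, then $\pi$ is finite and \'etale in codimension one, $\widetilde X$ is normal with $\pi^{-1}(x)$ again an isolated non-log-terminal locus, and log canonicity is preserved both ways; since the $\sO_X$-module splitting of $\sO_X\hookrightarrow\pi_*\sO_{\widetilde X}$ survives reduction modulo $p\nmid r$, $F$-purity of the reduction of $\widetilde X$ descends to that of $X$. So assume $x\in X$ log canonical with $K_X$ Cartier but not canonical. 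Next take a dlt modification $f\colon(Y,\Delta)\to(X,0)$ with $Y$ normal and $\Q$-factorial, $(Y,\Delta)$ dlt, $\Delta$ the reduced $f$-exceptional divisor over $x$, and $K_Y+\Delta=f^*K_X$; since $x$ is isolated non-log-terminal, $\Delta\neq 0$ and lies in the projective fiber $f^{-1}(x)$. Adjunction along the strata of $\Delta$ (Fujino's theory of log canonical centers and the different) produces projective log canonical pairs $(S,\Delta_S)$ of dimension $\le n-1$ with $K_S+\Delta_S\equiv 0$ — weak log Calabi--Yau pairs. The core is then an induction on $n$: show that dense $F$-splitting of the adjunction pairs $(S,\Delta_S)$, together with the snc (hence tautologically $F$-split after reduction, by Fedder) structure of $(Y,\Delta)$ along $\Delta$, forces dense $F$-purity of $\sO_{X,x}$. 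Concretely, the obstruction to splitting the Frobenius on $R_p$ is a class in $H^n_{\m}(R_p)$ (dual to the Frobenius action on the canonical module), which a vanishing/duality analysis on the dlt modification should express through the exceptional fiber; the dlt/snc data annihilate everything except a ``Calabi--Yau-type'' contribution from $\Delta$, and log canonicity is exactly what places that contribution within reach of the inductive hypothesis.

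\emph{Main obstacle.} The base of the induction is the hard point: the case where, after the reductions above, $x\in X$ is a cone over a weak log Calabi--Yau pair. There ``of dense $F$-pure type'' becomes ``globally $F$-split for infinitely many $p$'', which for a Calabi--Yau-type variety is exactly the assertion that it admits ordinary reduction at infinitely many primes. This is known for abelian varieties and for $K3$ surfaces (Bogomolov--Zarhin), but open for general Calabi--Yau varieties in dimension $\ge 3$; indeed Conjecture $\mathrm{A}_n$ in full strength \emph{implies} this ``dense ordinary reduction'' statement, so the two are of comparable depth. Thus the route above gives an unconditional proof of $\mathrm{A}_n$ precisely when the relevant log canonical centers are assembled from abelian varieties and $K3$ surfaces — which already covers $\mu(x\in X)\le 2$ and the three-dimensional corollary — while in general it reduces the conjecture to the density of ordinary reduction for weak log Calabi--Yau pairs. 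A second, more technical, difficulty is to make the inductive step itself rigorous: to match, uniformly in $p$, Fedder's splitting sections on the dlt modification with the different along $\Delta$, so that global $F$-splittings of the boundary pairs genuinely lift to splittings of the Frobenius on $R_p$.
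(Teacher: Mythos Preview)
Your big-picture analysis is right: the paper does not prove Conjecture~$\mathrm{A}_n$ unconditionally either, and both you and the paper reduce the hard direction to a ``dense ordinary reduction'' statement for Calabi--Yau-type varieties (the paper's Conjecture~$\mathrm{B}_d$), known only for $d\le 2$. Where you diverge is in how the reduction is carried out.

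The paper does not induct on $n$, and it never passes through adjunction pairs $(S,\Delta_S)$. It works directly with the Frobenius action on cohomology. The key lemma (Proposition~\ref{key lemma}) shows that, for an index-one singularity, $F$-purity of the reduction $x_s\in X_s$ is equivalent to bijectivity of Frobenius on the one-dimensional space $H^{n-1}(D_s,\sO_{D_s})$, provided the model $g\colon(Z,D)\to X$ satisfies $\Supp D=\Supp g^{-1}(x)$ and $g$ is an isomorphism off $D$. A dlt blow-up does \emph{not} satisfy these conditions in general (it is only small outside $x$), so the paper runs a $K_Z$-MMP with scaling over $X$ and passes to the relative canonical model $\widetilde Z$ to force them, verifying that each flip and divisorial contraction preserves $H^{n-1}(D_i,\sO_{D_i})$ as an $\sO_X[F]$-module (Proposition~\ref{MMP}). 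Finally, a Mayer--Vietoris argument on the snc divisor $E$ coming from a log resolution compatible with the dlt model collapses $H^{n-1}(D,\sO_D)$ to $H^\mu(V,\sO_V)$ for a single minimal log canonical center $V$ --- a $\mu$-dimensional projective variety with rational singularities and $K_V\sim 0$, \emph{with no boundary}. That is exactly Conjecture~$\mathrm{B}_\mu$, which is a sharper target than the weak-log-Calabi--Yau-pair statement your induction lands on.

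The payoff of the paper's route is that your ``second technical difficulty'' never arises. There is no lifting of Fedder-type splitting sections from the boundary back to $R_p$; one only tracks a one-dimensional cohomology group through birational maps and Mayer--Vietoris connecting homomorphisms, all of which are $\sO_X[F]$-linear by elementary \v{C}ech-complex reasons. Your inductive scheme would instead require a genuine ``$F$-splitting of the boundary pair implies $F$-purity of the total space'' lifting theorem along the dlt exceptional divisor, and it is not clear such a statement is available --- or any easier than the conjecture itself. The extra MMP step in the paper (beyond the dlt blow-up you take) is precisely what makes the cohomological criterion of Proposition~\ref{key lemma} applicable and thereby bypasses that lifting problem.
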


Hara--Watanabe \cite{HW} proved that normal $\Q$-Gorenstein singularities of dense $F$-pure type are log canonical. 
Unfortunately, the converse implication is widely open and only a few special cases are known. 
For example, the two-dimensional case follows from the results of Mehta--Srinivas \cite{MeS} and Hara \cite{Ha1}, 
and the case of hypersurface singularities whose defining polynomials are very general was proved by Hern\'andez \cite{He}. 
This problem is now considered as one of the most important problems on $F$-singularities. 
Making use of recent progress on the minimal model program, we prove Conjecture $\mathrm{A}_3$. 

Let $x \in X$ be an $n$-dimensional isolated log canonical singularity defined over an algebraically closed field $k$ of characteristic zero. 
We suppose that $x \in X$ is not log terminal and $K_X$ is Cartier at $x$. 
Let $f: Y \to X$ be a resolution of singularities such that $f$ is an isomorphism outside $x$ and that $\mathrm{Supp}\; f^{-1}(x)$ is a simple normal crossing divisor on $X$. 
Then we can write 
$$K_Y=f^*K_X+F-E,$$
where $E$ and $F$ are effective divisors and have no common irreducible components. 
In \cite{Fu2}, the first author defined the invariant $\mu(x \in X)$ by
$$\mu=\mu(x \in X)=\min \{ \dim W \mid W \textup{ is a stratum of } E\}$$
and he showed that this invariant plays an important role in the study of $x \in X$. 
Using his method (which is based on the minimal model program), we can check that any minimal stratum $W$ of $E$ is a projective resolution of a $\mu$-dimensional projective variety $V$ with only rational singularities such that $K_V$ is linearly trivial. 
Also, running a minimal model program with scaling (see \cite{BCHM} for the minimal model program with scaling), we show that $H^{\mu}(V, \sO_V)$ can be viewed as the socle of the top local cohomology module $H^n_x(\sO_X)$ of $x \in X$. 

Here we introduce the following conjecture. 
\begin{mainconj}
Let $Z$ be a $d$-dimensional projective variety over an algebraically closed field of characteristic zero with only rational singularities such that $K_Z$ is linearly trivial. 
Then the action induced by the Frobenius morphism on the cohomology group $H^d(Z_p, \sO_{Z_p})$ of its modulo $p$ reduction $Z_p$ is bijective for infinitely many $p$. 
\end{mainconj}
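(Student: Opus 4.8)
The plan is to establish Conjecture $\mathrm{B}_d$ in the range $d\le 2$, which is precisely what the reduction arguments sketched above ask of it for the Main Theorem; for $d\ge 3$ I expect the statement to remain open, for the reason indicated at the end. I would start with two preliminary reductions. Since $Z$ has only rational singularities it is Cohen--Macaulay, and as $K_Z$ is linearly trivial its dualizing sheaf satisfies $\omega_Z\cong\sO_Z$; Serre duality then gives
\[
H^d(Z,\sO_Z)\;\cong\;H^0(Z,\omega_Z)^{\vee}\;\cong\;H^0(Z,\sO_Z)^{\vee}\;\cong\;k,
\]
so this cohomology group is one-dimensional and bijectivity of the Frobenius on its modulo $p$ reduction just means that the action is nonzero. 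Also, rationality of the singularities yields $Rg_*\sO_{\widetilde Z}\cong\sO_Z$ for any resolution $g\colon\widetilde Z\to Z$, hence a functorial isomorphism $H^d(Z,\sO_Z)\cong H^d(\widetilde Z,\sO_{\widetilde Z})$; spreading everything out over a finitely generated $\Z$-algebra and reducing modulo $p\gg 0$ (where the same higher direct image vanishing survives by flatness), this isomorphism becomes compatible with the Frobenius actions on $Z_p$ and $\widetilde Z_p$. So I may replace $Z$ by a smooth projective model and, after a standard specialization, assume it is defined over a number field.

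The cases $d=0$ and $d=1$ are then immediate. For $d=0$ there is nothing to prove. For $d=1$, a normal projective curve is already smooth, and $\omega_Z\cong\sO_Z$ forces $Z$ to be an elliptic curve; Serre's theorem that an elliptic curve over a number field has ordinary reduction at a set of primes of density one, combined with the fact that ordinariness is exactly bijectivity of the Frobenius on $H^1(\sO)$, settles it.

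For $d=2$ a smooth model comes for free: $Z$ is a normal projective Gorenstein surface with rational --- hence canonical, i.e.\ Du Val --- singularities, so its minimal resolution $\widetilde Z\to Z$ is crepant and $K_{\widetilde Z}\sim 0$. Since $\widetilde Z$ carries no $(-1)$-curve it is a minimal surface, and because $p_g(\widetilde Z)=1$ the Enriques--Kodaira classification leaves only two possibilities: $\widetilde Z$ is a K3 surface or an abelian surface. In either case $\widetilde Z_p$ is ordinary for infinitely many $p$ --- for abelian surfaces this reduces to the elliptic curve case after splitting off isogeny factors, or follows from the known fact that an abelian variety of dimension at most $2$ over a number field has infinitely many ordinary primes, and for K3 surfaces it is the theorem of Bogomolov--Zarhin on ordinary reduction of K3 surfaces. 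Ordinariness of $\widetilde Z_p$ makes the Frobenius bijective on every $H^i(\widetilde Z_p,\sO_{\widetilde Z_p})$, in particular on $H^2$, and hence on $H^2(Z_p,\sO_{Z_p})$ via the reduction of the first paragraph.

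The genuine obstacle lies in the range $d\ge 3$: there $Z$ admits no useful structure theory (and a crepant model need not even exist), and, more fundamentally, it is a well-known open problem whether a strict Calabi--Yau variety of dimension $\ge 3$ over a number field has infinitely many primes of ordinary reduction --- this is exactly what prevents one from proving Conjecture $\mathrm{A}_n$ for $n\ge 4$. For $d\le 2$ the only steps needing care, as opposed to new ideas, are the descent from a finitely generated base field to a number field and the check that the comparison isomorphism $H^d(Z,\sO_Z)\cong H^d(\widetilde Z,\sO_{\widetilde Z})$ is Frobenius-equivariant after reduction, both of which are routine within the standard reduction-modulo-$p$ machinery underlying dense $F$-pure type.
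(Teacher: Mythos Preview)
Your proposal is correct and matches the paper's own treatment: the paper likewise only establishes $\mathrm{B}_d$ for $d\le 2$ (Lemma~\ref{conj B2}), by reducing to $k=\overline{\Q}$, passing to a smooth model via rational singularities, identifying the minimal resolution as an abelian or K3 surface, and invoking Ogus and Bogomolov--Zarhin/Joshi--Rajan respectively. The only structural difference is that the paper handles $d<2$ by the reduction $\mathrm{B}_{n+1}\Rightarrow\mathrm{B}_n$ (Lemma~\ref{n+1=>n}, taking products with an elliptic curve) rather than treating $d=0,1$ directly as you do; both routes are equally short.
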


Conjecture $\mathrm{B}_d$ is open in general, but it follows from a combination of the results of 
Ogus \cite{Og}, Bogomolov--Zarhin \cite{BZ} and Joshi--Rajan \cite{JR} that Conjecture $\mathrm{B}_d$ holds true if $d \le 2$. 

Now we suppose that Conjecture $\mathrm{B}_{\mu}$ is true.  
Applying Conjecture $\mathrm{B}_{\mu}$ to $V$, we see that the Frobenius action on the cohomology group $H^{\mu}(V_p, \sO_{V_p})$ of modulo $p$ reduction $V_p$ of $V$ is bijective for infinitely many $p$. 
On the other hand, by Matlis duality, the $F$-purity  of modulo $p$ reduction $x_p \in X_p$ of $x \in X$ is equivalent to the injectivity of the Frobenius action on $H^n_{x_p}(\sO_{X_p})$. 
This injectivity can be checked by the injectivity of the Frobenius action on its socle $H^{\mu}(V_p, \sO_{V_p})$. 
Thus, summing up the above, we conclude that $x \in X$ is of dense $F$-pure type.

A similar argument works in more general settings and our main result is stated as follows. 

\begin{mainthm}[=Theorem \ref{main result}]
Let $x \in X$ be a log canonical singularity defined over an algebraically closed 
field $k$ of characteristic zero
such that $x$ is an isolated non-log-terminal point of $X$. 
If Conjecture $\mathrm{B}_{\mu}$ holds true where $\mu=\mu(x \in X)$, then $x \in X$ is of dense $F$-pure type. 
In particular, if  $\mu(x \in X) \le 2$, then $x \in X$ is of dense $F$-pure type. 
\end{mainthm}

As a corollary of the above theorem, we show that Conjecture $\mathrm{A}_{n+1}$ is equivalent to Conjecture $\mathrm{B}_n$ (Corollary  \ref{equivalence of conjectures}). 
Since Conjecture $\mathrm{B}_2$ is known to be true, Conjecture $\mathrm{A}_3$ holds true. 
That is, log canonicity is equivalent to being of dense $F$-pure type in the case of three-dimensional isolated normal $\Q$-Gorenstein singularities (Corollary \ref{3-dim case}).   

\begin{acknowledgement}
The second author is grateful to Yoshinori Gongyo and Mircea Musta\c{t}\u{a} for helpful  conversations. 
The first and second authors were partially supported by Grant-in-Aid for Young Scientists (A) 20684001 
and (B) 23740024, respectively, from JSPS. 
\end{acknowledgement}

\section{Preliminaries on log canonical singularities}
In this section, we work over an algebraically closed field of characteristic zero. 
We start with the definition of singularities of pairs. 
Let $X$ be a normal variety and $D$ be an effective $\Q$-divisor on $X$ such that $K_X+D$ is $\Q$-Cartier. 
\begin{defn}
Let $\pi: \widetilde{X} \to X$ be a birational morphism from a normal variety $\widetilde{X}$.
Then we can write 
$$K_{\widetilde{X}}=\pi^*(K_X+D)+\sum_{E}a(E, X, D) E,$$ 
where $E$ runs through all the distinct prime divisors on $\widetilde{X}$ and $a(E, X, D)$ is a rational number. 
We say that the pair $(X, D)$ is \textit{canonical} (resp. \textit{plt}, \textit{log canonical}) 
if $a(E, X, D) \ge 0$ (resp. $a(E, X, D) >-1$, $a(E, X, D) \ge -1$) for every exceptional divisor $E$ over $X$.  
If $D=0$, we simply say that $X$ has only canonical (resp. log terminal, log canonical) singularities. 
We say that $(X, D)$ is \textit{dlt} if $(X, D)$ is log canonical and
there exists a log 
resolution $\pi: \widetilde{X} \to X$ such that $a(E, X, D)>-1$ for every 
$\pi$-exceptional divisor $E$ on $\widetilde{X}$. 
Here, a \textit{log resolution} $\pi: \widetilde{X} \to X$ of $(X, D)$ 
means that $\pi$ is a proper birational morphism, $\widetilde{X}$ is a smooth variety, 
$\mathrm{Exc}(\pi)$ is a divisor and $\mathrm{Exc}(\pi) \cup \mathrm{Supp}\; \pi^{-1}_*D$ 
is a simple normal crossing divisor. 
\end{defn}

\begin{defn}
A subvariety $W$ of $X$ is said to be a \textit{log canonical center} for the log canonical
pair $(X, D)$ if there exist a proper birational morphism $\pi: \widetilde{X} \to X$ 
from a normal variety $\widetilde{X}$ and a prime divisor $E$ on $\widetilde{X}$ with 
$a(E, X, D)= -1$ such that $\pi(E)=W$. 
Then $W$ is denoted by $c_{X}(E)$. 
\end{defn}

\begin{rem}\label{szabo}
Let $(X, D)$ be a dlt pair. 
There then exists a log resolution $f:Y \to X$ such that $f$ induces an isomorphism over the generic point of any log canonical center of $(X,D)$ and $a(E,X,D) >-1$ for every $f$-exceptional divisor $E$. 
This is an immediate consequence of  \cite[Divisorial Log Terminal Theorem]{Sz}. 
\end{rem}

From now on, let $X$ be a normal $\Q$-Gorenstein algebraic variety and $x \in X$ be a germ. 
The \textit{index} of $X$ at $x$ is the smallest positive integer $r$ such that $rK_X$ is Cartier at $x$. 

\begin{defn}\label{def1.4}
Let $x \in X$ be a log canonical singularity such that $x$ is a log canonical center. 
First we assume that the index of $X$ at $x$ is one. 
Take a projective birational morphism $f: Y \to X$ from a smooth variety $Y$ such that $\mathrm{Supp}\; f^{-1}(x)$ 
and $\mathrm{Exc}(f)$ are simple normal crossing divisors. Then we can write
$$K_Y=f^*K_X+F-E,$$
where $E$ and $F$ are effective divisors on $Y$ and have no common irreducible components. 
By assumption, $E$ is a reduced simple normal crossing divisor on $Y$.  
We define $\mu(x \in X)$ by
$$\mu(x \in X)=\min\{\dim W\mid W\textup{ is a stratum of $E$ and }f(W)=x\}.$$
Here we say a subvariety $W$ is a \textit{stratum} of $E=\sum_{i \in I}E_i$ if there exists a subset $\{i_1, \dots, i_k\} \subseteq I$ such that $W$ is an irreducible component of the intersection $E_{i_1} \cap \dots \cap E_{i_k}$. 
This definition is independent of the choice of the resolution $f$. 

In general, we take an index one cover $\rho: X' \to X$ with $x'=\rho^{-1}(x)$ to define $\mu(x \in X)$ by
$$\mu(x \in X)=\mu(x' \in X').$$
Since the index one cover is unique up to \'etale isomorphisms, the above definition of $\mu(x \in X)$ is well-defined. 
\end{defn}

We will give in Section \ref{sec4} a quick overview of the invariant $\mu$ and some related topics for the reader's convenience. 

\begin{rem}
(1) The first author showed in \cite[Theorem 5.5]{Fu2} that 
the invariant $\mu$ coincides with Ishii's Hodge theoretic invariant (see \cite{Is} and \cite[5.1]{Fu2} for the definition). 

(2) By the main result of \cite{Fu1}, the index of $x \in X$ is bounded if $\mu(x \in X) \le 2$. 
The reader is referred to \cite{Fu1} for the precise values of indices. 
\end{rem}

In order to prove the main result of this paper, we use the notion 
of \textit{dlt blow-ups}, which was first introduced by Christopher Hacon. 

\begin{lem}[\textup{cf.~\cite[Lemma 2.9]{Fu2} and \cite[Section 4]{fujino-ssmmp}}]\label{dlt blow-ups}
Let $X$ be a log canonical variety of index one such that $X$ is quasi-projective, 
$x$ is an isolated non-log-terminal point of $X$, and that $X$ is canonical outside $x$.  
Then there exists a projective birational morphism $g: Z \to X$ such that $K_Z+D=g^*K_X$ 
with $D$ a reduced divisor on $Z$, the pair $(Z, D)$ is a 
$\Q$-factorial dlt pair and $g$ is a small morphism  outside $x$.
\end{lem}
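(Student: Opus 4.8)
The plan is to realize $g$ as the output of a minimal model program run over $X$, following the standard construction of \emph{dlt blow-ups} (also called \emph{dlt modifications}). First I would fix a log resolution $f\colon Y\to X$ of the pair $(X,0)$ and let $\Delta_Y$ be the reduced divisor whose prime components are exactly the $f$-exceptional prime divisors. Since $Y$ is smooth and $\Delta_Y$ is a simple normal crossing divisor, $(Y,\Delta_Y)$ is a $\Q$-factorial dlt pair, and writing $K_Y+\Delta_Y=f^*K_X+G$ one computes $G=\sum_{E}\bigl(1+a(E,X,0)\bigr)\,E$, the sum running over the $f$-exceptional prime divisors $E$. Log canonicity of $X$ gives $G\ge 0$; the index-one hypothesis makes $K_X$, hence $f^*K_X$ and $G$, integral; and the coefficient of $G$ along $E$ vanishes exactly when $a(E,X,0)=-1$, i.e.\ when $E$ is a log canonical place of $(X,0)$. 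Moreover $G$ is $f$-exceptional, and --- this is where ``$X$ is canonical outside $x$'' enters --- every log canonical place of $(X,0)$ has center $\{x\}$, so every coefficient-zero component of $G$ has center $\{x\}$.

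Next I would run the $(K_Y+\Delta_Y)$-minimal model program over $X$ with scaling of an ample divisor. Because $K_Y+\Delta_Y\sim_{\Q,X}G$ with $G$ effective, $K_Y+\Delta_Y$ is pseudo-effective over $X$, so this program cannot terminate in a Mori fiber space; invoking \cite{BCHM} and the refinements recorded in \cite{Fu2} and \cite{fujino-ssmmp}, it terminates with a projective birational morphism $g\colon Z\to X$ such that $(Z,D)$ is a $\Q$-factorial dlt pair with $K_Z+D$ nef over $X$, where $D$ is the strict transform of $\Delta_Y$; since $\Delta_Y$ is reduced, so is $D$.

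To conclude, let $G_Z$ be the strict transform of $G$, so that on $Z$ one has $K_Z+D=g^*K_X+G_Z$, with $G_Z\ge 0$ and $G_Z$ $g$-exceptional. Since $G_Z\equiv_X K_Z+D$, the divisor $G_Z$ is also $g$-nef, so $G_Z=0$ by the negativity lemma, and hence $K_Z+D=g^*K_X$. Finally, each step of the program merely contracts a divisor or is a flip, so every $g$-exceptional prime divisor on $Z$ is the strict transform of an $f$-exceptional prime divisor on $Y$; and since $G_Z=0$, those that survive must be log canonical places of $(X,0)$, all of which have center $\{x\}$ by the first paragraph. Therefore $g$ has no exceptional divisor lying over $X\setminus\{x\}$, i.e.\ $g$ is small outside $x$, which is the last assertion.

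I expect the main obstacle to be the termination claim in the second paragraph: $K_Y+\Delta_Y$ is in general not big over $X$, so \cite{BCHM} cannot be quoted directly for termination. What rescues the argument is precisely that $K_Y+\Delta_Y$ is $\Q$-linearly equivalent over $X$ to an \emph{effective}, $f$-exceptional divisor; turning this into an honest termination statement for the minimal model program with scaling --- using the existence of flips for $\Q$-factorial dlt pairs together with special termination --- is the technical core, and is exactly what the cited results provide.
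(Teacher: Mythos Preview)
Your proposal is correct and follows essentially the same approach the paper has in mind: the lemma is not proved in the main text but is referred to \cite[Lemma 2.9]{Fu2} and \cite[Section 4]{fujino-ssmmp}, and the construction is recapitulated in Section~\ref{sec4}, where one runs a minimal model program with scaling over $X$ starting from a log resolution, and then uses the negativity lemma to conclude that the output satisfies $K_Z+D=g^*K_X$.

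The only (cosmetic) difference is the choice of boundary on the log resolution. In the paper's appendix one writes $K_Y=f^*K_X+F-E$ with $E$ reduced (the discrepancy~$-1$ part) and runs the $(K_Y+E)$-MMP, so that $K_Y+E\equiv_X F$; you instead take $\Delta_Y$ to be the full reduced exceptional divisor and run the $(K_Y+\Delta_Y)$-MMP, so that $K_Y+\Delta_Y\equiv_X G$. In both cases the right-hand side is effective and $f$-exceptional, the same termination input from \cite{BCHM} and \cite{fujino-ssmmp} applies, and the negativity lemma kills the residual divisor on the minimal model. Your variant has the mild advantage that the argument for ``$g$ is small outside $x$'' becomes a direct consequence of $G_Z=0$ (surviving exceptional divisors must have discrepancy $-1$, hence center $\{x\}$), whereas with the paper's boundary one must separately note that components of $F$ are contracted. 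Either way, the substance is identical.
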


\begin{lem}\label{dlt blow-ups have only canonical singularities}
In Lemma \ref{dlt blow-ups}, $Z$ has only canonical singularities. 
\end{lem}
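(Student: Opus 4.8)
The plan is to work with the dlt pair $(Z, D)$ produced by Lemma \ref{dlt blow-ups} and exploit the equality $K_Z + D = g^* K_X$ together with the fact that $X$ is canonical outside $x$. Since $Z$ is already $\Q$-factorial and normal, canonicity is a statement about discrepancies, so the goal is to show that $a(E, Z, 0) \ge 0$ for every exceptional divisor $E$ over $Z$. First I would fix an arbitrary divisor $E$ over $Z$ and compare the two discrepancies $a(E, Z, 0)$ and $a(E, Z, D)$: since $D$ is effective, we always have $a(E, Z, 0) \ge a(E, Z, D)$, and in fact $a(E, Z, 0) = a(E, Z, D) + \mathrm{mult}_E(\pi^* D)$ where $\pi$ is a common resolution realizing $E$. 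Because $(Z, D)$ is log canonical, $a(E, Z, D) \ge -1$, so the only way canonicity can fail is if $a(E, Z, 0) < 0$, which forces $\mathrm{mult}_E(\pi^* D) < 1$ and $a(E, Z, D)$ close to $-1$; more precisely, $a(E,Z,0) < 0$ implies $\mathrm{mult}_E(\pi^*D) < 1 - a(E,Z,D) \le 1 + (\text{something})$, so I want to rule out $a(E, Z, 0) < 0$ entirely.

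The key step is to pass to $X$ via $g$. For $E$ over $Z$ we have $a(E, Z, D) = a(E, X, 0)$ because $g$ is crepant for the pair $(Z,D)$, i.e. $K_Z + D = g^* K_X$. Now the center $c_X(E)$ of $E$ on $X$ is either disjoint from $x$ or contains $x$. In the first case, $X$ is canonical near $c_X(E)$ by hypothesis, so $a(E, X, 0) \ge 0$, and then $a(E, Z, 0) \ge a(E, X, 0) \ge 0$ since $\mathrm{mult}_E(\pi^* D) \ge 0$ — wait, this gives the wrong direction; instead I use that $E$ has center on $Z$ mapping to $c_X(E)$, and since $g$ is small away from $x$, near $c_X(E)$ the map $g$ is an isomorphism in codimension one, so discrepancies over $Z$ and over $X$ literally agree there and $a(E,Z,0) = a(E,X,0) \ge 0$. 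In the second case, $c_X(E) = \{x\}$ (it is the isolated non-log-terminal point), and I claim $a(E, Z, 0) \ge 0$ still: here I use that $(Z, D)$ is dlt, so by Remark \ref{szabo} the divisor $E$, if it is a component of $D$ it has $a = -1$ and $\mathrm{mult}_E \pi^* D \ge 1$ giving $a(E,Z,0) \ge 0$; if $a(E, Z, D) > -1$ then it is $> -1 + (\text{gap})$, and combined with $\mathrm{mult}_E(\pi^* D) \ge 0$... this is the delicate part, because a priori a divisor with small positive $D$-discrepancy and tiny multiplicity in $\pi^* D$ could have negative $Z$-discrepancy.

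The resolution of that delicacy — and the main obstacle — is a local computation near the strata of $D$ over $x$. I would argue as follows: the non-canonical locus of $Z$ is contained in the singular locus of $Z$, which for a $\Q$-factorial dlt pair lies in $\bigcup_{i \ne j}(D_i \cap D_j)$ together with $\mathrm{Sing}(D_i)$; any divisor $E$ with $a(E, Z, 0) < 0$ must have center inside this locus, which maps into $x$. So it suffices to treat divisors $E$ centered over $x$, and for these I use that $(Z,D)$ is dlt with $D$ reduced and simple normal crossing on the smooth locus, whence locally analytically near a point of $D_1 \cap \cdots \cap D_k$ the pair looks like $(\A^n, \{z_1 \cdots z_k = 0\})$, for which every divisorial valuation $v$ satisfies $a(v, Z, 0) = a(v, Z, D) + \sum_{i=1}^k v(z_i) \ge -1 + 1 = 0$ since $\sum v(z_i) = v(z_1\cdots z_k) \ge 1$ when the center lies in $D$, and $\ge 0$ automatically. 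The honest version replaces "locally analytically smooth" by: pick a log resolution $h: W \to Z$ of $(Z, D)$ which is an isomorphism over the generic point of every log canonical center (Remark \ref{szabo}); on $W$ one has $K_W = h^*(K_Z + D) + \sum a_i E_i$ with all $a_i \ge -1$ and $a_i = -1$ exactly for the strict transforms of the $D_i$, and then $K_W = h^* K_Z + \sum a_i E_i + h^* D = h^* K_Z + \sum(a_i E_i + (\text{coeff}) E_i)$ where the coefficient of each $E_i$ in $h^* D$ is $\ge 1$ precisely when $E_i$ sits over $D$; a direct bookkeeping of these two contributions shows every coefficient of $K_W - h^* K_Z$ is $\ge 0$, i.e. $h$ is crepant-or-better for $(Z,0)$, and since $h$ can be taken to extract any given $E$ this yields $a(E, Z, 0) \ge 0$ for all $E$. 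I expect the bookkeeping in this last step — correctly tracking which exceptional divisors lie over $D$ and confirming the multiplicity-$\ge 1$ claim there — to be where the real care is needed, though it is ultimately the standard fact that reduced snc boundaries contribute enough to offset the log discrepancy deficit.
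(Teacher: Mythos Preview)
Your argument has a genuine gap, and it is exactly at the point you yourself flag as ``delicate'': the case of a divisor $E$ with $a(E,Z,D)>-1$. The paper's proof disposes of this case in one line by observing that $K_Z+D=g^*K_X$ is \emph{Cartier} (Lemma~\ref{dlt blow-ups} assumes $X$ has index one), so every discrepancy $a(E,Z,D)$ is an integer; hence $a(E,Z,D)>-1$ forces $a(E,Z,D)\ge 0$, and then $a(E,Z,0)\ge a(E,Z,D)\ge 0$ because $D\ge 0$. You never invoke this integrality, and without it your Szab\'o-resolution bookkeeping does not close the gap. Concretely: on your resolution $h:W\to Z$ you want the coefficient of each $h$-exceptional $E_i$ in $K_W-h^*K_Z$, namely $a_i+m_i$ with $a_i=a(E_i,Z,D)>-1$ and $m_i=\mathrm{mult}_{E_i}(h^*D)$, to be $\ge 0$. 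But $D$ is only $\Q$-Cartier on the $\Q$-factorial variety $Z$, not Cartier, so $m_i$ can be a rational number strictly between $0$ and $1$ even when $c_Z(E_i)\subseteq D$; and if $c_Z(E_i)\subseteq g^{-1}(x)\setminus D$ (which is not excluded by the hypotheses), then $m_i=0$ and you are left with only $a_i>-1$. In either situation $a_i+m_i\ge 0$ does not follow.

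Your treatment of the complementary case $a(E,Z,D)=-1$ is essentially the same as the paper's: the center is then a log canonical center of the dlt pair $(Z,D)$, near which $Z$ is smooth and $D$ is reduced snc, and a direct valuation computation gives $a(E,Z,0)\ge 0$. Likewise your reduction for centers mapping away from $x$ (using that $X$ is canonical there and $a(E,Z,D)=a(E,X,0)$ by crepancy) is correct, but it is unnecessary once the integrality of $a(E,Z,D)$ is in hand. The fix is simply to insert the observation that $K_Z+D$ is Cartier and draw the integer-discrepancy consequence; the rest of the geometric case analysis can then be dropped.
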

\begin{proof}
If $a(E, Z, D) > -1$, then $a(E, Z, D) \ge 0$ because $K_Z+D$ is Cartier. 
Since $K_Z$ is $\Q$-Cartier and $D$ is an effective divisor on $Z$, one has $a(E, Z, 0) \ge 0$.  
If $a(E, Z, D) =-1$, then we may assume that $Z$ is a smooth variety and $D$ is a reduced simple normal crossing divisor on $Z$ by shrinking $Z$ around the log canonical center $c_Z(E)$. 
In this case, $a(E, Z, 0) \ge 0$. Thus, $Z$ has only canonical singularities. 
\end{proof}


\section{Preliminaries on $F$-pure singularities}
In this section, we briefly review the definition of $F$-pure singularities and its properties which we will need later. 
\begin{defn}[\textup{\cite{HR}, \cite{HH2}}]\label{F-sing}
Let $x \in X$ be a point of an $F$-finite integral scheme $X$ of characteristic $p>0$. 
\renewcommand{\labelenumi}{(\roman{enumi})}
\begin{enumerate}
\item 
$x \in X$ is said to be \textit{$F$-pure} if the Frobenius map 
$$F: \sO_{X, x} \to F_*\sO_{X,x} \quad a \mapsto a^{p}$$ splits as an $\sO_{X, x}$-module homomorphism. 
\item
$x \in X$ is said to be \textit{strongly $F$-regular} if for every nonzero $c \in \sO_{X, x}$, there exists an integer $e \ge 1$ such that $$c F^e: \sO_{X, x} \to F^e_*\sO_{X,x} \quad a \mapsto c a^{p^e}$$ splits as an $\sO_{X, x}$-module homomorphism. 
\end{enumerate}
\end{defn}

\begin{rem}
Strong $F$-regularity implies $F$-purity. 
\end{rem}

The following criterion for $F$-purity is well-known to experts, but we include it here for the reader's convenience. 
\begin{lem}[\textup{cf.~\cite{HR}}]\label{Fpure_rem}
Let $x \in X$ be a closed point with index one of an $n$-dimensional $F$-finite integral scheme $X$.   
Then
$x \in X$ is $F$-pure if and only if $F(z) \ne 0$, where $F$ is the natural Frobenius action on $H^n_{x}(\sO_{X})$ and $z$ is a generator of the socle $(0:\m_x)_{H^n_{x}(\sO_{X})}$. 
\end{lem}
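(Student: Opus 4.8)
The statement is the familiar local-cohomological criterion for $F$-purity; the plan is to recall its standard proof in three steps. (i) After completing, use the index-one hypothesis to identify $H^n_x(\sO_X)$ with an injective hull of the residue field; (ii) invoke the Matlis-dual reformulation of the splitting condition defining $F$-purity as injectivity of the natural Frobenius on that module; (iii) reduce this injectivity to nonvanishing on a socle generator, using that an injective hull is an essential extension of its simple socle. For step (i), write $(R,\m_x)=\sO_{X,x}$, an $F$-finite local domain. The $F$-purity of $R$, the natural Frobenius action on $H^n_x(\sO_X)=H^n_{\m_x}(R)$, and its socle are all unaffected by passing to the $\m_x$-adic completion (for $F$-finite rings $R$ is $F$-pure iff $\widehat R$ is, and local cohomology commutes with completion), so I may assume $R$ is complete. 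Because $x$ has index one, the canonical module $\omega_R:=H^n_{\m_x}(R)^{\vee}$ — where $(-)^{\vee}=\Hom_R(-,E)$ and $E:=E_R(R/\m_x)$ is the injective hull of the residue field — is isomorphic to $R$; since the top local cohomology $H^n_{\m_x}(R)$ is Artinian, Matlis biduality then yields $H^n_x(\sO_X)\cong E$, compatibly with the Frobenius actions. In particular the socle $(0:\m_x)_{H^n_x(\sO_X)}$ is a one-dimensional $R/\m_x$-vector space, so $z$ is well defined up to a nonzero scalar, and $z\neq 0$ by Grothendieck nonvanishing.

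For step (ii): by definition $R$ is $F$-pure precisely when the Frobenius $R\to F_*R$ splits as a map of $R$-modules, and by Matlis duality over the complete ring $R$ this is equivalent to injectivity of the induced natural $p$-linear Frobenius $E\to F_*E$ on the injective hull. This is the standard reformulation, for which I would simply cite \cite{HR}. Combining it with the identification $H^n_x(\sO_X)\cong E$ from step (i), we obtain: $R$ is $F$-pure if and only if the natural Frobenius action $F$ on $H^n_x(\sO_X)$ is injective.

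For step (iii): if $R$ is $F$-pure then $F$ is injective, and $F(z)\neq 0$ since $z\neq 0$. Conversely, assume $F(z)\neq 0$ and suppose, for contradiction, that $\Ker\bigl(F:H^n_x(\sO_X)\to H^n_x(\sO_X)\bigr)$ is nonzero. Under the isomorphism $H^n_x(\sO_X)\cong E$ this is a nonzero $R$-submodule of $E$; since $E$ is an essential extension of its simple socle $(R/\m_x)z$, every nonzero submodule of $E$ contains $z$, so $z\in\Ker(F)$, i.e.\ $F(z)=0$, a contradiction. Hence $F$ is injective and, by step (ii), $R$ is $F$-pure. The only genuinely delicate point in the whole argument is the bookkeeping in steps (i)--(ii): one must verify that the identification $H^n_x(\sO_X)\cong E$ and the Matlis-dual reformulation are compatible with the respective $p$-linear Frobenius structures, so that "$F$ splits on $R$'' really corresponds to "$F$ is injective on $H^n_x(\sO_X)$''. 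This compatibility check — rather than the purely formal essential-extension argument — is the part that is "well known to experts''.
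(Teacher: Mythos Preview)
Your proof is correct and follows essentially the same approach as the paper's: identify $H^n_x(\sO_X)$ with the injective hull via the quasi-Gorenstein (index one) hypothesis, translate $F$-purity into injectivity of Frobenius on this module by Matlis duality, and then reduce to the socle via the essential-extension property. The only cosmetic difference is that you pass to the completion explicitly before invoking Matlis duality, whereas the paper works directly and phrases the duality step as the surjectivity of $F^{\vee}:\Hom_{\sO_{X,x}}(F_*\sO_{X,x},\sO_{X,x})\to\sO_{X,x}$ being equivalent to the injectivity of $F$ on $H^n_x(\sO_X)$.
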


\begin{proof}
First note that $H^n_{x}(\sO_{X})$ is isomorphic to the injective hull of the residue field $\sO_{X, x}/\m_x$, because $\sO_{X, x}$ is quasi-Gorenstein. 
By definition, $x \in X$ is $F$-pure if and only if 
$$F^{\vee}: \Hom_{\sO_{X, x}}(F_*\sO_{X, x}, \sO_{X, x}) \to \Hom_{\sO_{X, x}}(\sO_{X, x}, \sO_{X, x})=\sO_{X, x}$$ is surjective. 
$F^{\vee}$ is the Matlis dual of the natural Frobenius action $F$ on $H^n_{x}(\sO_{X})$, so the surjectivity of $F^{\vee}$ is equivalent to the injectivity of $F$. 
Since $H^n_{x}(\sO_{X})$ is an essential extension of the socle $(0:\m_x)_{H^n_{x}(\sO_{X})}$, $F$ is injective if and only if $F|_{(0:\m_x)_{H^n_{x}(\sO_{X})}}$ is injective. 
Finally, the latter condition is equivalent to saying that $F(z) \ne 0$, because the socle $(0:\m_x)_{H^n_{x}(\sO_{X})}$ is a one-dimensional $\sO_{X, x}/\m_x$-vector space. 
\end{proof}

We define the notion of $F$-purity and strong $F$-regularity in characteristic zero, using reduction from characteristic zero to positive characteristic.  

\begin{defn}\label{dense Fpure def}
Let $x \in X$ be a point of a scheme of finite type over a field $k$ of characteristic zero. 
Choosing a suitable finitely generated $\Z$-subalgebra $A \subseteq k$, 
we can construct a (non-closed) point $x_A$ of a scheme $X_A$ of finite type over $A$ such that $(X_A, x_A) \times_{\Spec A} k \cong (X, x)$. 
By the generic freeness, we may assume that $X_A$ and $x_A$ are flat over $\Spec A$. 
We refer to $x_A \in X_A$ as a \textit{model} of $x \in X$ over $A$. 
Given a closed point $s \in \Spec A$, we denote by $x_{s} \in X_s$ the fiber of $x \in X$ over $s$.  
Then $X_s$ is a scheme defined over the residue field $\kappa(s)$ of $s$, which is a finite field. 
The reader is referred to \cite[Chapter 2]{HH} and \cite[Section 3.2]{MS} for more detail on reduction from characteristic zero to characteristic $p$. 

\renewcommand{\labelenumi}{(\roman{enumi})}
\begin{enumerate}
\item $x \in X$ is said to be of \textit{strongly $F$-regular type} if there exists a model of $x \in X$ over a finitely generated $\Z$-subalgebra $A$ of $k$ and a dense open subset $S \subseteq \Spec A$ such that $x_s \in X_s$ is strongly $F$-regular for all closed points $s \in S$. 
\item $x \in X$ is said to be of \textit{dense $F$-pure type} if there exists a model of $x \in X$ over a finitely generated $\Z$-subalgebra $A$ of $k$ and a dense subset of closed points $S \subseteq \Spec A$ such that $x_s \in X_s$ is $F$-pure for all $s \in S$. 
\end{enumerate}
\end{defn}

\begin{rem}
The definitions of strongly $F$-regular type and dense $F$-pure type are independent of the choice of a model. 
\end{rem}

\begin{thm}[\textup{\cite[Theorem 5.2]{Ha}}]
Let $x \in X$ be a normal $\Q$-Gorenstein singularity defined over a field of characteristic zero. 
Then $x \in X$ is log terminal if and only if it is of strongly $F$-regular type. 
\end{thm}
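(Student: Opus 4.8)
The plan is to reduce both implications to a comparison, after reduction modulo $p$, between test ideals and a multiplier ideal. We may assume $X=\Spec R$ is affine and $x$ a closed point; fix $r$ with $rK_X$ Cartier, choose a model $R_A$ over a finitely generated $\Z$-subalgebra $A\subseteq k$, and spread out over $A$ a log resolution $f\colon Y\to X$, the $\Q$-Cartier structure of $K_X$, and the divisors in $K_Y=f^*K_X+\sum_i a_iE_i$. Two facts organize the proof. \emph{Characteristic $p$:} a normal $\Q$-Gorenstein local ring essentially of finite type over a finite field is strongly $F$-regular if and only if its test ideal is the unit ideal, and for $p\gg0$ the test ideal $\tau(\sO_{X_s})$ equals the reduction modulo $p$ of the multiplier ideal $\J(X,0)$ (the comparison theorem of Hara and Smith, extended by Hara--Yoshida and Takagi). \emph{Characteristic zero:} $\J(X,0)$ is computed on $f$ as $f_*\sO_Y(\lceil\sum_i a_iE_i\rceil)$, so $\J(X,0)=\sO_X$ near $x$ if and only if every $\lceil a_i\rceil\ge 0$, i.e. every $a_i>-1$ (if some $a_i\le-1$ the rounded-up divisor has a strictly negative coefficient, so $1\notin\J(X,0)$); and since log terminality of $x\in X$ may be tested on the single log resolution $f$, this last condition is exactly the log terminality of $x\in X$.

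\textbf{Log terminal implies strongly $F$-regular type.} If $x\in X$ is log terminal then $\J(X,0)=\sO_X$ near $x$, so the comparison theorem gives $\tau(\sO_{X_s})=\sO_{X_s}$ near $x_s$ for every $p\gg0$; hence $x_s\in X_s$ is strongly $F$-regular for all closed points $s$ in a dense open subset of $\Spec A$, which is precisely the definition of being of strongly $F$-regular type.

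\textbf{Strongly $F$-regular type implies log terminal.} Suppose $x_s\in X_s$ is strongly $F$-regular for all $s$ in a dense subset $S\subseteq\Spec A$. If $x\in X$ were not log terminal, some $a_i\le-1$, so $\J(X,0)\subsetneq\sO_X$ near $x$; by generic freeness its reduction $\overline{\J(X,0)}$ is a proper ideal of $\sO_{X_s}$ for all closed points $s$ in a dense open subset, and for $p\gg0$ the comparison theorem then gives $\tau(\sO_{X_s})=\overline{\J(X,0)}\subsetneq\sO_{X_s}$, so $x_s\in X_s$ is not strongly $F$-regular. But $S$ is dense and hence meets the dense open locus of closed points $s$ where both $\tau(\sO_{X_s})=\overline{\J(X,0)}$ and $\overline{\J(X,0)}\subsetneq\sO_{X_s}$, a contradiction. (One can also argue via Smith's theorem that $F$-rational type implies rational singularities: strong $F$-regularity of the reductions forces $x\in X$ to have rational singularities, and a normal $\Q$-Gorenstein variety with rational singularities is log terminal.)

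\textbf{Main obstacle.} The real content is the comparison theorem, and in particular the inclusion $\tau(\sO_{X_s})\supseteq\overline{\J(X,0)}$ used in the first implication. One realizes the test ideal on the resolution through the Grothendieck trace $F^e_*\omega_{Y_s}\to\omega_{Y_s}$ of the $e$-th Frobenius, pushes it forward along $f_s$, and dualizes; that the resulting map surjects onto the submodule cut out by $\J(X,0)$ reduces to Kawamata--Viehweg-type vanishing on $Y_s$, which holds only for $p\gg0$---for instance by lifting $Y_s$ together with the relevant divisor modulo $p^2$ and invoking Deligne--Illusie, or by spreading out the characteristic-zero vanishing. The delicate part is to make these vanishing and freeness statements uniform in $s$, so that the locus in $\Spec A$ over which $x_s\in X_s$ is strongly $F$-regular is genuinely dense and open; granting that, the remainder is formal manipulation of trace maps, local duality, and the elementary discrepancy bookkeeping above.
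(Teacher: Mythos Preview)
The paper does not give its own proof of this statement: it is quoted as background from Hara \cite[Theorem~5.2]{Ha} and used without argument, so there is no in-paper proof to compare against.

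Your sketch is a correct outline of the standard route and identifies the right invariants: log terminality is equivalent to $\J(X,0)=\sO_X$, strong $F$-regularity of a reduction is equivalent to $\tau(\sO_{X_s})=\sO_{X_s}$, and the bridge is the equality $\tau(\sO_{X_s})=\overline{\J(X,0)}$ for $p\gg 0$. One organizational caveat: invoking ``the comparison theorem of Hara and Smith'' to deduce the forward implication is close to circular, since Hara's 1998 proof of the very theorem you are asked to establish is, in essence, the first instance of that comparison in the $\Q$-Gorenstein setting. You do address this honestly in your ``Main obstacle'' paragraph, and what you describe there---realizing the Frobenius trace on $\omega_{Y_s}$, pushing forward along $f_s$, and reducing the desired surjectivity to a Kawamata--Viehweg-type vanishing on $Y_s$ valid for $p\gg 0$ via Deligne--Illusie---is exactly Hara's method. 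It would be cleaner to present that as the argument rather than as an afterthought. For the reverse implication, the parenthetical alternative you give (strongly $F$-regular type $\Rightarrow$ $F$-rational type $\Rightarrow$ rational singularities by Smith, together with the fact that $\Q$-Gorenstein rational implies log terminal) is both historically accurate and avoids needing the containment $\tau\subseteq\overline{\J}$ at all.
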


In this paper, we will discuss an analogous statement for log canonical singularities. 
Especially, we will consider the following conjecture. 

\begin{conjA}\label{lc vs Fpure}
Let $x \in X$ be an $n$-dimensional normal $\Q$-Gorenstein singularity defined over an algebraically closed field $k$ of characteristic zero such that $x$ is an isolated non-log-terminal point of $X$. 
Then $x \in X$ is log canonical if and only if it is of dense $F$-pure. 
\end{conjA}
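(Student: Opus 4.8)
The plan is to follow the strategy sketched in the introduction: reduce, by way of the index one cover and a dlt blow-up, to the geometry of a minimal log canonical center $V$, and then feed $V$ into Conjecture $\mathrm{B}_\mu$ and the socle criterion of Lemma~\ref{Fpure_rem}.

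\textbf{Step~1 (reduction to index one).} Dense $F$-pure type is a local condition, so I may assume $X=\Spec R$ is affine and quasi-projective and that $x$ is its only non-log-terminal point. Let $\rho\colon X'\to X$ be the index one cover at $x$ and $x'=\rho^{-1}(x)$: then $\rho$ is finite, quasi-\'etale and crepant ($K_{X'}=\rho^*K_X$), $X'$ has index one, $x'$ is a single point, and $\mu(x'\in X')=\mu(x\in X)=\mu$ by Definition~\ref{def1.4}. Since $\rho$ is quasi-\'etale and crepant, $X'$ is log canonical and is log terminal outside $x'$; as $K_{X'}$ is Cartier, log terminal forces canonical, so $X'$ is canonical outside $x'$, while $x'$ is a non-log-terminal point of $X'$. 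Thus $X'$ satisfies the hypotheses of Lemma~\ref{dlt blow-ups}. On a model of $x\in X$ over a finitely generated $\Z$-subalgebra $A\subseteq k$ in which the index $r$ is invertible, the trace map $\tfrac{1}{r}\operatorname{tr}\colon\rho_*\sO_{X'_s}\to\sO_{X_s}$ splits the inclusion $\sO_{X_s}\hookrightarrow\rho_*\sO_{X'_s}$ for every closed point $s\in\Spec A$; pushing a splitting of the Frobenius on $\sO_{X'_s}$ forward and composing with this splitting yields a splitting of the Frobenius on $\sO_{X_s}$, so $F$-purity of $x'_s\in X'_s$ implies $F$-purity of $x_s\in X_s$. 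Hence it suffices to treat $x'\in X'$, which I now rename $x\in X$; so $X$ is affine, quasi-projective, of index one, canonical outside the isolated non-log-terminal point $x$, and I set $n=\dim X$.

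\textbf{Step~2 (dlt blow-up and the socle).} By Lemma~\ref{dlt blow-ups} choose $g\colon Z\to X$ projective birational, small over $X\setminus\{x\}$, with $K_Z+D=g^*K_X$, $(Z,D)$ a $\Q$-factorial dlt pair and, by Lemma~\ref{dlt blow-ups have only canonical singularities}, $Z$ canonical. Let $V$ be a log canonical center of $(Z,D)$ of smallest dimension among those contracted to $x$; by Remark~\ref{szabo} together with the dlt condition, $V$ is a stratum of $D$, of dimension $\mu$. Carrying out the first author's method, which is based on the minimal model program, and in particular on the minimal model program with scaling of \cite{BCHM} (as in the introduction and Section~\ref{sec4}), one obtains: $V$ is a $\mu$-dimensional projective variety with only rational singularities and $K_V$ linearly trivial; and there is a natural identification, compatible with the natural Frobenius actions after reduction modulo $p$,
$$H^{\mu}(V,\sO_V)\;\cong\;(0:\m_x)_{H^n_x(\sO_X)},$$
realizing the one-dimensional space $H^{\mu}(V,\sO_V)$ (one-dimensional because $K_V\sim 0$) as the socle of the top local cohomology of $x\in X$. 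This construction of $V$ and of the Frobenius-compatible socle identification is where the real work lies, and I expect it to be the main obstacle; I take it as given, as in the excerpt.

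\textbf{Step~3 (conclusion).} Enlarge $A$ if necessary and spread $X$, $Z$, $D$, $g$, $V$ and the identification of Step~2 out over $A$, then shrink $\Spec A$ so that for every closed point $s$ the reduction $V_s$ is again a $\mu$-dimensional projective variety with only rational singularities and $K_{V_s}\sim 0$, and the Frobenius-compatible identification $H^{\mu}(V_s,\sO_{V_s})\cong(0:\m_{x_s})_{H^n_{x_s}(\sO_{X_s})}$ persists. Now assume Conjecture $\mathrm{B}_\mu$. Applied to $V$, it gives that the Frobenius action on $H^{\mu}(V_s,\sO_{V_s})$ is bijective for infinitely many $p$; since (after a further shrinking making $H^{\mu}(V_s,\sO_{V_s})$ locally free of constant rank) this bijectivity is an open condition on the irreducible scheme $\Spec A$, namely the nonvanishing of a Hasse--Witt determinant, the closed points at which it holds form a dense subset. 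For each such $s$ the socle identification forces the Frobenius action on $(0:\m_{x_s})_{H^n_{x_s}(\sO_{X_s})}$ to be nonzero, so $x_s\in X_s$ is $F$-pure by Lemma~\ref{Fpure_rem}; hence $x\in X$ is of dense $F$-pure type, and by Step~1 so is the singularity we started with. Finally, $\mathrm{B}_0$ and $\mathrm{B}_1$ are elementary and $\mathrm{B}_2$ follows from Ogus \cite{Og}, Bogomolov--Zarhin \cite{BZ} and Joshi--Rajan \cite{JR}, so when $\mu\le 2$ the hypothesis of Conjecture $\mathrm{B}_\mu$ is automatically satisfied and $x\in X$ is of dense $F$-pure type unconditionally.
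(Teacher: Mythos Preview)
The statement you were given is Conjecture~$\mathrm{A}_n$, and the paper does \emph{not} prove it: it is left open for $n\ge 4$. What you have actually sketched is the paper's Main Theorem (Theorem~\ref{main result}), which proves only the forward implication ``log canonical $\Rightarrow$ dense $F$-pure type'' and only \emph{conditionally} on Conjecture~$\mathrm{B}_\mu$. Since $\mathrm{B}_\mu$ is open for $\mu\ge 3$, this does not establish $\mathrm{A}_n$ unconditionally beyond $n\le 3$. You also omit the reverse implication (dense $F$-pure type $\Rightarrow$ log canonical), which is the Hara--Watanabe theorem \cite[Theorem~3.9]{HW} and is required for the ``if and only if''.

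On the part you do sketch, your strategy is exactly the paper's, and you are candid that Step~2 ``takes as given'' the technical core. For comparison, in the paper that core is Proposition~\ref{key lemma}, Proposition~\ref{MMP}, and the Claim inside the proof of Theorem~\ref{main result}. One substantive point you elide: the dlt blow-up $g\colon Z\to X$ of Lemma~\ref{dlt blow-ups} is only \emph{small} outside $x$, not an isomorphism, so the socle identification of Proposition~\ref{key lemma} (which needs $g|_{Z\setminus D}$ to be an isomorphism onto $X\setminus\{x\}$) does not apply to $(Z,D)$ directly; the paper runs a $K_Z$-MMP with scaling and passes to the relative canonical model $(\widetilde{Z},\widetilde{D})$ in Proposition~\ref{MMP} precisely to repair this, and then has to track the cohomology $H^{n-1}(D,\sO_D)$ across every flip and contraction. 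Finally, a small error in Step~3: Frobenius bijectivity on $H^\mu(V_s,\sO_{V_s})$ is not a Zariski-open condition on $\Spec A$ (the Frobenius is not $A$-linear, so there is no ``Hasse--Witt determinant'' cutting it out over $A$); fortunately Conjecture~$\mathrm{B}_\mu$ already delivers a dense set of closed points, so this extra argument is both invalid and unnecessary.
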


\begin{rem}
Conjecture $\mathrm{A}_n$ is known to be true when $n=2$ (see \cite{Ha1}, \cite{MeS} and \cite{Wa}) or  when $x \in X$ is a hypersurface singularity whose defining polynomial is very general (see \cite{He}). 
The reader is referred to \cite[Remark 2.6]{Ta} for more detail. 
\end{rem}

\begin{defn}
Let $X$ be an $F$-finite scheme of characteristic $p>0$. 
If $X=\Spec R$ is affine,  we denote by $R[F]$ the ring
$$R[F] =\frac{R\{F\}}{\langle r^pF-Fr \mid r \in R \rangle}$$
which is obtained from $R$ by adjoining a non-commutative variable $F$ subject to the relation $r^pF=Fr$ for all $r \in R$. 
For a general scheme $X$, we denote by $\sO_X[F]$ the sheaf of rings obtained by gluing the respective rings $\sO_X(U_i)[F]$ over an affine open cover $X=\bigcup_i U_i$. 
\end{defn}

\begin{eg}\label{compatibility}
(1) 
Let $f: Y \to X$ be a morphism of schemes over an $F$-finite affine scheme $Z$. 
Then for all $i \ge 0$, $H^i(X, \sO_X)$ and $H^i(Y, \sO_Y)$ each has a natural $\sO_Z[F]$-module structure and $f$ induces an $\sO_Z[F]$-module homomorphism $f_*: H^i(X, \sO_X) \to H^i(Y, \sO_Y)$. 

(2)
Let $Y$ be a closed subscheme of  a scheme $X$ over an $F$-finite affine scheme $Z$. 
Then for all $i \ge 0$, we have the following natural exact sequence of $\sO_Z[F]$-modules 
$$\cdots \to H^i_Y(X, \sO_X) \to H^i(X, \sO_X) \to H^i(X \setminus Y, \sO_X) \to H^{i+1}_Y(X, \sO_X) \to \cdots.$$

(3)
Let $X$ be a scheme over an $F$-finite affine scheme $Z$ and  $Y_1, Y_2 \subseteq X$ be closed subschemes. 
Let $Y$ denote the scheme-theoretic union of $Y_1$ and $Y_2$. 
Then for all $i \ge 0$, the Mayer--Vietoris exact sequence
\begin{align*}
\cdots \to H^i(Y, \sO_{Y}) \to H^i(Y_1,  \sO_{Y_1}) \oplus H^i(Y_2, \sO_{Y_2}) &\to H^i(Y_1 \cap Y_2, \sO_{Y_1 \cap Y_2})\\ 
&\to H^{i+1}(Y, \sO_{Y}) \to \cdots
\end{align*}
becomes an exact sequence of $\sO_Z[F]$-modules. 
\end{eg}
\begin{proof}
The proof is immediate from the fact that every cohomology module in Example \ref{compatibility} can be computed from the  \v{C}ech complex. 
\end{proof}

The following proposition is a key to prove the main result of this paper. 

\begin{prop}\label{key lemma}
Let $x \in X$ be an $n$-dimensional normal singularity with index one defined over an algebraically closed field $k$ of characteristic zero. 
Let $g: Z \to X$ be a projective birational morphism and $D$ be a reduced $\Q$-Cartier divisor on $Z$ satisfying the following properties: 
\begin{enumerate}
\item $Z$ has only rational singularities,
\item  $K_Z+D \sim_g 0$,
\item  $g|_{Z \setminus D}: Z \setminus D \to X \setminus \{x\}$ is an isomorphism,
\item $\Supp D=\Supp {g}^{-1}(x)$, 
\end{enumerate}
Then $x \in X$ is of dense $F$-pure type if and only if given a model of $D$ 
over a finitely generated $\Z$-subalgebra $A$ of $k$, there exists a dense 
subset $S \subseteq \Spec A$ such that the action of Frobenius on $H^{n-1}(D_{s}, \sO_{D_s})$ 
is bijective for every closed point $s \in S$. 
\end{prop}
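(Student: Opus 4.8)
The plan is to reduce the $F$-purity of $x_s \in X_s$ to the bijectivity of the Frobenius action on $H^{n-1}(D_s,\sO_{D_s})$ by relating the socle of the top local cohomology $H^n_{x_s}(\sO_{X_s})$ to $H^{n-1}(D_s,\sO_{D_s})$, using the long exact sequence of local cohomology together with the fact that $Z$ has rational singularities and $K_Z+D\sim_g 0$. First I would pass to a model over a finitely generated $\Z$-subalgebra $A\subseteq k$ of the whole data $(x\in X, g\colon Z\to X, D)$, shrinking $\Spec A$ so that all the geometric hypotheses (1)--(4) spread out, $Z_s$ still has rational singularities, $g_s$ is birational and an isomorphism away from $x_s$, $K_{Z_s}+D_s\sim_{g_s}0$, and $\Supp D_s=\Supp g_s^{-1}(x_s)$, for every closed point $s$ in a dense open subset. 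This is standard spreading-out, invoking the openness of the rational singularity locus and of the various divisor-theoretic conditions.

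Next I would identify the socle of $H^n_{x_s}(\sO_{X_s})$ with $H^{n-1}(D_s,\sO_{D_s})$ as $\sO_{A}[F]$-modules, or at least produce a Frobenius-compatible isomorphism after applying $(0:\m_{x_s})$. The key input is that, since $Z_s$ has rational singularities and $g_s$ is an isomorphism on $Z_s\setminus D_s \cong X_s\setminus\{x_s\}$, one has $R g_{s*}\sO_{Z_s}=\sO_{X_s}$, hence $H^i_{x_s}(\sO_{X_s})\cong H^i_{D_s}(Z_s,\sO_{Z_s})$ for all $i$; combining this with the local-cohomology exact sequence for the closed subset $D_s\subseteq Z_s$ (Example \ref{compatibility}(2)) and the vanishing $H^i(Z_s,\sO_{Z_s})=0$ for $i>0$ (again rational singularities, $g_s$ affine onto an affine), one gets $H^n_{x_s}(\sO_{X_s})\cong H^{n-1}(D_s,\sO_{D_s})$ for $n\ge 2$, compatibly with $F$. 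Under this isomorphism, the full module $H^{n-1}(D_s,\sO_{D_s})$ plays the role of the ``socle-detecting'' piece: by Lemma \ref{Fpure_rem}, $x_s\in X_s$ is $F$-pure iff $F$ acts injectively on the one-dimensional socle of $H^n_{x_s}(\sO_{X_s})$, and the condition $K_Z+D\sim_g 0$ (which makes $D_s$ Cohen--Macaulay of dimension $n-1$ with $H^{n-1}(D_s,\sO_{D_s})$ of Matlis-dual rank one over $\sO_{X_s,x_s}$) ensures this socle is exactly detected by the Frobenius on $H^{n-1}(D_s,\sO_{D_s})$.

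Then I would run the bi-implication. If the Frobenius action on $H^{n-1}(D_s,\sO_{D_s})$ is bijective for all $s$ in a dense subset $S$, then in particular it is injective, hence its restriction to the socle of $H^n_{x_s}(\sO_{X_s})$ is nonzero, so by Lemma \ref{Fpure_rem} $x_s\in X_s$ is $F$-pure for all $s\in S$, i.e.\ $x\in X$ is of dense $F$-pure type. Conversely, if $x\in X$ is of dense $F$-pure type, pick a dense subset $S$ with $x_s\in X_s$ $F$-pure; then $F$ is injective on $H^n_{x_s}(\sO_{X_s})$, hence injective on $H^{n-1}(D_s,\sO_{D_s})$, and since $H^{n-1}(D_s,\sO_{D_s})$ is a finite-length $\sO_{X_s,x_s}$-module (being a module over the Artinian quotient, or via Matlis duality with $H^n_{x_s}(\sO_{X_s})$ and the $\m_{x_s}$-cofiniteness), an injective $F$-semilinear endomorphism of a finite-dimensional $\kappa(s)$-vector space after a bounded number of iterates stabilizes, and in fact injectivity forces bijectivity because $\dim_{\kappa(s)} F_*^eH^{n-1}$ does not grow — more precisely $H^{n-1}(D_s,\sO_{D_s})$ is finite-dimensional over $\kappa(s)$ and $F\colon H^{n-1}(D_s,\sO_{D_s})\to H^{n-1}(D_s,\sO_{D_s})$ is $\kappa(s)$-semilinear, so injectivity implies surjectivity. (One should be slightly careful: $\kappa(s)$ is a finite field, so a $p$-semilinear injective endomorphism of a finite-dimensional vector space is automatically bijective, which is exactly what is needed.)

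The main obstacle I expect is the identification of the socle of $H^n_{x_s}(\sO_{X_s})$ with (the relevant part of) $H^{n-1}(D_s,\sO_{D_s})$ \emph{compatibly with the Frobenius}, and checking that the finite-length/duality bookkeeping needed to pass from ``injective on the socle'' to ``injective, hence bijective, on all of $H^{n-1}(D_s,\sO_{D_s})$'' genuinely holds — this uses $K_Z+D\sim_g 0$ to control $\omega_{D_s}$ and hence the structure of $H^{n-1}(D_s,\sO_{D_s})$ as dual to $H^0$ of a dualizing complex, and it is here that the hypothesis that $x$ has index one and that $D$ is reduced $\Q$-Cartier is used essentially. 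The spreading-out step is routine but must be done with some care so that \emph{all} four hypotheses, plus rationality of singularities of $Z_s$, survive simultaneously for $s$ in a common dense open set.
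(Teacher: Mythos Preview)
Your overall strategy---identify $H^{n-1}(D_s,\sO_{D_s})$ with the socle of $H^n_{x_s}(\sO_{X_s})$ and then invoke Lemma~\ref{Fpure_rem}---is exactly the paper's strategy, and the spreading-out and bi-implication steps are fine. But the central identification is argued incorrectly, and the error is not cosmetic.

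You claim that $Rg_{s*}\sO_{Z_s}=\sO_{X_s}$ because $Z_s$ has rational singularities. This is false: that equality is precisely the statement that $X_s$ has rational singularities, which is \emph{not} assumed and is typically false here. Already for the cone $X$ over an elliptic curve $E$ with vertex $x$ and $g\colon Z\to X$ the blow-up at $x$, one has $R^1g_*\sO_Z\cong k$ supported at $x$, hence $H^1(Z,\sO_Z)\cong k\neq 0$. Consequently your vanishing $H^i(Z_s,\sO_{Z_s})=0$ for $i>0$ fails, and so does the chain of isomorphisms built on it. Worse, your asserted conclusion $H^n_{x_s}(\sO_{X_s})\cong H^{n-1}(D_s,\sO_{D_s})$ cannot possibly hold: the left side is the injective hull of the residue field over an $n$-dimensional local ring, hence infinite-dimensional over $\kappa(s)$, while the right side is finite-dimensional. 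The local-cohomology sequence of Example~\ref{compatibility}(2) involves $H^i(Z\setminus D,\sO_Z)$, not $H^i(D,\sO_D)$, so even granting your vanishing you would not land on $H^{n-1}(D_s,\sO_{D_s})$.

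The paper's argument avoids all of this by never asserting $R^ig_*\sO_Z=0$. Instead it proves an \emph{embedding} $H^{n-1}(Z,\sO_Z)\hookrightarrow H^{n-1}(Z\setminus D,\sO_Z)\cong H^n_x(\sO_X)$, using only the dual Grauert--Riemenschneider vanishing $H^{n-1}_D(Z,\sO_Z)=0$; it then identifies $H^{n-1}(Z,\sO_Z)\cong H^{n-1}(D,\sO_D)$ from the ideal-sheaf sequence together with $H^{n-1}(Z,\sO_Z(-D))\cong H^{n-1}(Z,\sO_Z(K_Z))=0$ (here $K_Z+D\sim_g 0$ is used); finally it checks $\m_x\cdot H^{n-1}(Z,\sO_Z)=0$ and, via $\omega_D\cong\sO_D$ and Serre duality, that $\dim_k H^{n-1}(D,\sO_D)=1$, so this subspace is exactly the one-dimensional socle. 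With that in hand, bijectivity on $H^{n-1}(D_s,\sO_{D_s})$ is simply nonvanishing on the socle, and your detour through ``injective semilinear on a finite-dimensional space implies bijective'' becomes unnecessary.
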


\begin{proof}
Without loss of generality, we may assume that $X$ is affine. 
Suppose given a model of $(x \in X, Z, D, g)$ over a finitely generated $\Z$-subalgebra $A$ of $k$. 

First we will show that enlarging $A$ if necessary, we can view $H^{n-1}(Z_s, \sO_{Z_s})$ as 
an $\sO_{X_s}[F]$-submodule of $H^n_{x_s}(\sO_{X_s})$ for all closed points $s \in \Spec A$. 
Since $f|_{Z \setminus D}: Z \setminus D \to X \setminus \{x\}$ is an isomorphism, we have natural isomorphisms
$$H^{n-1}(Z \setminus D, \sO_Z) \cong H^{n-1}(X \setminus \{x\}, \sO_X) \cong H^n_x(\sO_X).$$
On the other hand, we have the natural exact sequence
$$H^{n-1}_D(Z, \sO_Z) \to H^{n-1}(Z, \sO_Z) \to H^{n-1}(Z \setminus D, \sO_Z)$$
and $H^{n-1}_D(Z, \sO_Z)=0$ by the dual form of Grauert--Riemenschneider vanishing theorem (see, for example, \cite[Lemma 4.19 and Remark 4.20]{Fu3}). 
Hence we can view $H^{n-1}(Z, \sO_Z)$ as an $\sO_X$-submodule of $H^n_x(\sO_X)$.  
By Example \ref{compatibility} (1), (2), after possibly enlarging $A$, we may assume that $H^{n-1}(Z_s, \sO_{Z_s})$ is an $\sO_{X_s}[F]$-submodule of $H^n_{x_s}(\sO_{X_s})$ for all closed points $s \in \Spec A$. 

Next we will show that we may assume that 
$$H^{n-1}(Z_s, \sO_{Z_s}) \cong H^{n-1}(D_s, \sO_{D_s})$$ as an $\sO_{X_s}[F]$-module homomorphism for all closed points $s \in \Spec A$. 
The short exact sequence $0 \to \sO_Z(-D) \to \sO_Z \to \sO_D \to 0$ induces the exact sequence 
$$H^{n-1}(Z, \sO_Z(-D)) \to H^{n-1}(Z, \sO_Z) \to H^{n-1}(D, \sO_D) \to H^{n}(Z, \sO_Z(-D))=0$$
of $\sO_X$-modules.
It follows from the Grauert--Riemenschneider vanishing theorem that  $H^{n-1}(Z, \sO_Z(-D)) \cong H^{n-1}(Z, \sO_Z(K_Z))=0$, so we have an $\sO_X$-module isomorphism $H^{n-1}(Z, \sO_Z) \cong H^{n-1}(D, \sO_D)$.
By Example \ref{compatibility} (1), after possibly enlarging $A$, we 
may assume 
that $H^{n-1}(Z_s, \sO_{Z_s}) \cong H^{n-1}(D_s, \sO_{D_s})$ as an $\sO_{X_s}[F]$-module homomorphism for all  closed points $s \in \Spec A$. 

Finally, we will check that $H^{d-1}(D_s, \sO_{D_s})$ is the socle of the $\sO_{X_s, x_s}[F]$-module of  $H^d_{x_s}(\sO_{X_s})$. 
Since
$$\m_x \cdot H^{n-1}(Z, \sO_Z)=H^0(Z, \sO_Z(-D)) \cdot H^{n-1}(Z, \sO_Z) \subseteq H^{n-1}(Z, \sO_Z(-D))=0,$$
$H^{n-1}(D, \sO_{D}) \cong H^{n-1}(Z, \sO_Z)$ is contained in the socle of $H^n_x(\sO_X)$. 
Let $\omega_D$ be the dualizing sheaf of $D$. 
Then we obtain $\omega_D\simeq \mathcal O_Z(K_Z+D)\otimes \mathcal O_D$ since $K_Z+D$ is Cartier. 
Therefore, $\omega_D\simeq \mathcal O_D$ because $K_Z+D\sim _g0$ and $g(D)=x$. 
By Serre duality (which holds for top cohomology groups even if the variety 
is not Cohen--Macaulay), one has $\dim_k H^{n-1}(D, \sO_D)=1$,  
because $H^{0}(D, {\omega_D})=H^{0}(D, \sO_D) \cong k$. 
The socle of $H^n_{x}(\sO_{X})$ is the one-dimensional $k$-vector space, so it coincides with $H^{n-1}(D, \sO_{D})$. 

By the above argument, the bijectivity of the Frobenius action on $H^{n-1}(D_{s}, \sO_{D_s})$ means that the restriction of the Frobenius action on $H^n_{x_s}(\sO_{X_s})$ to its socle is injective. 
This condition is equivalent to saying that $X_s$ is $F$-pure by Lemma \ref{Fpure_rem}. 
Thus, $x_s \in X_s$ is of dense $F$-pure type if and only if there exists a dense subset of closed points $S \subseteq \Spec A$ such that the Frobenius action on $H^{n-1}(D_{s}, \sO_{D_s})$ is bijective for all $s \in S$. 
\end{proof}

\section{Main result}

In order to state our main result, we introduce the following conjecture.

\begin{conjB}
Let $V$ be an $n$-dimensional projective variety over an algebraically closed field $k$ of characteristic zero with only rational singularities  such that $K_V$ is linearly trivial. 
Given a model of $V$ over a finitely generated $\Z$-subalgebra $A$ of $k$, there exists a dense subset of closed points $S \subseteq \Spec A$ such that the natural Frobenius action on $H^n(V_{s}, \sO_{V_{s}})$ is bijective for every $s \in S$. 
\end{conjB}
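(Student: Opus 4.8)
We sketch how to prove Conjecture $\mathrm{B}_n$ for $n\le 2$; the case $n\ge 3$, which we discuss at the end, is the real difficulty. If $n=0$ there is nothing to do: $V=\Spec k$, so $V_s=\Spec\kappa(s)$ with $\kappa(s)$ a finite field, and Frobenius is an automorphism of $\kappa(s)$ for every closed point $s$.

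Assume now $n\in\{1,2\}$. The first step is to reduce to the smooth case. Since $V$ has rational singularities, any resolution $\pi\colon\widetilde V\to V$ satisfies $R\pi_*\sO_{\widetilde V}\cong\sO_V$, hence $H^j(V,\sO_V)\cong H^j(\widetilde V,\sO_{\widetilde V})$ for all $j$; arguing as in the proof of Proposition \ref{key lemma} --- enlarging $A$ so that $R\pi_{s*}\sO_{\widetilde V_s}\cong\sO_{V_s}$ for every closed point $s$ --- these isomorphisms become compatible with the Frobenius actions after reduction modulo $p$ (cf.~Example \ref{compatibility}), so it suffices to prove the conjecture for $\widetilde V$. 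Moreover, since $\sO_V(K_V)\cong\sO_V$ and $V$ is Cohen--Macaulay, $V$ is Gorenstein; hence $V$ is smooth when $n=1$ and has at worst rational double points when $n=2$, and, taking $\widetilde V$ to be the minimal resolution, $K_{\widetilde V}=\pi^*K_V\sim 0$ because rational double points are crepant. We are thus reduced to a smooth projective variety $\widetilde V$ with $\dim\widetilde V=n\le 2$ and $K_{\widetilde V}$ linearly trivial, and --- this property being independent of the chosen model --- it is enough to treat one convenient model.

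By the genus formula when $n=1$ and the Enriques classification of surfaces when $n=2$, such a $\widetilde V$ is an elliptic curve, an abelian surface, or a K3 surface, and in each case $H^n(\widetilde V,\sO_{\widetilde V})$ is one-dimensional; thus bijectivity of the Frobenius on it is equivalent to its non-vanishing, and is implied by ordinariness of the reduction $\widetilde V_s$. The plan is then to invoke the known ordinary-reduction theorems: after a standard specialization argument one may assume $\widetilde V$ is defined over a number field, and then $\widetilde V_s$ is ordinary for a dense set of closed points $s$ --- this is classical for elliptic curves, and for abelian surfaces and K3 surfaces it is the content of \cite{Og} and \cite{BZ} respectively, while the translation between ordinariness and bijectivity of the Frobenius action on $H^\bullet(\sO)$, together with the reduction to a number field, is supplied by \cite{JR}. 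This yields the required dense subset $S\subseteq\Spec A$.

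Within this argument the delicate input is the K3 case, where the existence of infinitely many ordinary primes is itself a substantial theorem. The genuine obstacle, though, is $n\ge 3$: for a projective variety of dimension $\ge 3$ with trivial canonical class one does not know the existence of even a single prime of ordinary reduction, let alone a dense set of them, which is exactly why Conjecture $\mathrm{B}_n$ --- and hence Conjecture $\mathrm{A}_{n+1}$ --- remains open for $n\ge 3$.
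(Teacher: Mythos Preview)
Your proposal is correct and matches the paper's treatment: Conjecture~$\mathrm{B}_n$ is left open in general, and the case $n\le 2$ (Lemma~\ref{conj B2}) is proved exactly as you outline---pass to the smooth minimal model using that $V$ has rational singularities, classify the result as an elliptic curve, abelian surface, or K3 surface, and invoke the ordinary-reduction theorems of \cite{Og}, \cite{BZ}, and \cite{JR}. The only cosmetic difference is that the paper reduces $n\le 1$ to $n=2$ via Lemma~\ref{n+1=>n} (multiplying by an elliptic curve), whereas you dispose of $n=0,1$ directly.
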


\begin{rem}\label{remark on Conj B}
(1)
An affirmative answer to \cite[Conjecture 1.1]{MS} implies an affirmative answer to Conjecture $\mathrm{B}_n$. 
Indeed, take a resolution of singularities $\pi: \widetilde{V} \to V$. 
Since $V$ has only rational singularities, $\pi$ induces the 
isomorphism $H^n(V, \sO_V) \cong H^n(\widetilde{V}, \sO_{\widetilde{V}})$. 
Suppose given  a model of $\pi$ over a finitely generated $\Z$-subalgebra $A$ of $k$. 
If \cite[Conjecture 1.1]{MS} holds true, 
then there exists a dense subset of closed points $S \subseteq \Spec A$ 
such that the Frobenius action on $H^n(\widetilde{V}_{s}, \sO_{\widetilde{V}_{s}})$ is bijective for every $s \in S$. 
Since we may assume that 
$H^n(V_{s}, \sO_{V_{s}}) \cong H^n(\widetilde{V}_{s}, \sO_{\widetilde{V}_{s}})$ as 
$\kappa(s)[F]$-modules for all $s \in S$  by Example \ref{compatibility} (1), we obtain the assertion. 

(2) 
Let $W$ be an $n$-dimensional smooth projective variety defined over a perfect field of characteristic $p>0$. 
If $W$ is ordinary (in the sense of Bloch--Kato), then the natural Frobenius action on $H^n(W, \sO_W)$ is 
bijective (see \cite[Remark 5.1]{MS}). 
If $W$ is an abelian variety or a curve, then the converse implication also holds true (see \cite[Examples 5.4 and 5.5]{MS}).  
\end{rem}

\begin{lem}\label{n+1=>n}
Conjecture $\mathrm{B}_{n+1}$ implies Conjecture $\mathrm{B}_{n}$. 
\end{lem}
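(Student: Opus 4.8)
The plan is to reduce from dimension $n+1$ to dimension $n$ by intersecting with a general hyperplane section, using adjunction to control the canonical divisor and the short exact sequence restricting structure sheaves to pass the Frobenius action down one cohomological degree.

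Let $V$ be an $n$-dimensional projective variety with only rational singularities and $K_V$ linearly trivial; I want to produce an $(n+1)$-dimensional variety $W$ to which Conjecture $\mathrm{B}_{n+1}$ applies and whose top Frobenius action controls that of $V$. The most natural candidate is a cone-type or fibration-type construction, but cleanest is the following. First I would choose an ample line bundle $L$ on $V$ and form the total space of the line bundle $L^{-1}$, or rather its projective completion $W' = \mathbb{P}_V(\mathcal{O}_V \oplus L)$, so that $V$ sits inside $W'$ as a section. However, the canonical bundle of $W'$ is not trivial, so I would instead pass to a suitable anticanonical-type cover or blow down. A more robust route: since $K_V \sim 0$, consider $W = V \times E$ where $E$ is an elliptic curve (or more generally any $1$-dimensional variety with $K \sim 0$ — i.e.\ an elliptic curve, to which $\mathrm{B}_1$ applies). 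Then $W$ is $(n+1)$-dimensional, has only rational singularities (product of rational-singularity varieties, one of them smooth), and $K_W = \mathrm{pr}_V^* K_V + \mathrm{pr}_E^* K_E \sim 0$. By the Künneth formula, $H^{n+1}(W, \mathcal{O}_W) \cong H^n(V,\mathcal{O}_V) \otimes H^1(E, \mathcal{O}_E)$, and this isomorphism is compatible with the Frobenius action after reduction mod $p$ (the \v{C}ech-complex computation, as in Example \ref{compatibility}). Since $H^1(E, \mathcal{O}_E)$ is $1$-dimensional and the Frobenius on it is bijective exactly when $E_s$ is ordinary, which happens for a dense set of $s$ (indeed $\mathrm{B}_1$ applied to a fixed ordinary elliptic curve, or simply the classical fact that supersingular primes have density zero), the Frobenius on $H^{n+1}(W_s, \mathcal{O}_{W_s})$ is bijective iff the Frobenius on $H^n(V_s, \mathcal{O}_{V_s})$ is bijective, for $s$ in the intersection of the relevant dense sets — still dense.

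The key steps, in order, are: (1) fix an ordinary elliptic curve $E$ over $k$ (or a number field) and set $W = V \times_k E$; (2) verify $W$ has only rational singularities and $K_W$ is linearly trivial; (3) choose compatible models of $V$, $E$, $W$ over a common finitely generated $\Z$-subalgebra $A$ of $k$, enlarging $A$ so that $W_s = V_s \times_{\kappa(s)} E_s$ and the Künneth isomorphism $H^{n+1}(W_s, \mathcal{O}_{W_s}) \cong H^n(V_s, \mathcal{O}_{V_s}) \otimes_{\kappa(s)} H^1(E_s, \mathcal{O}_{E_s})$ holds as $\kappa(s)[F]$-modules for all closed points $s$; (4) apply Conjecture $\mathrm{B}_{n+1}$ to $W$ to get a dense set $S \subseteq \Spec A$ where the Frobenius on $H^{n+1}(W_s, \mathcal{O}_{W_s})$ is bijective; (5) conclude that on this same $S$, since the Frobenius acts diagonally on the tensor product and $H^1(E_s,\mathcal{O}_{E_s})$ is $1$-dimensional, bijectivity on $H^{n+1}(W_s,\mathcal{O}_{W_s})$ forces bijectivity on $H^n(V_s, \mathcal{O}_{V_s})$, which is exactly Conjecture $\mathrm{B}_n$ for $V$.

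The main obstacle I anticipate is step (3): making the Künneth isomorphism \emph{Frobenius-equivariant} after reduction mod $p$, uniformly in $s$. The naive Künneth isomorphism is $\mathcal{O}$-linear, not a priori $[F]$-linear, but because both sides are computed from the tensor product of \v{C}ech complexes and the Frobenius on the \v{C}ech complex of a product is the tensor product of the Frobenii (the $p$-th power map is multiplicative and respects the product decomposition of the coordinate rings of affine opens $U \times U'$), the identification does respect $F$; this is the same mechanism invoked in Example \ref{compatibility} and only requires spreading out the affine cover over $A$. A secondary, minor point is ensuring $V \times E$ genuinely has only rational singularities — this follows since rationality of singularities is preserved under products with smooth varieties (e.g.\ via the Leray spectral sequence for a resolution $\widetilde{V} \times E \to V \times E$, or flatness of $E$ over the base). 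Neither obstacle is serious; the argument is essentially a packaging of Künneth plus the density of ordinary reduction for elliptic curves.
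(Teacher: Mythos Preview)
Your proposal is correct and takes essentially the same approach as the paper: both form the product $W = V \times E$ with an elliptic curve, apply Conjecture $\mathrm{B}_{n+1}$ to $W$, and use the K\"unneth decomposition $H^{n+1}(W_s,\mathcal{O}_{W_s}) \cong H^n(V_s,\mathcal{O}_{V_s}) \otimes H^1(E_s,\mathcal{O}_{E_s})$ to deduce bijectivity of Frobenius on $H^n(V_s,\mathcal{O}_{V_s})$. The only cosmetic difference is that the paper does not bother fixing an \emph{ordinary} elliptic curve or invoking density of ordinary primes---since bijectivity of the tensor-product Frobenius already forces bijectivity on each tensor factor (the $H^1(E_s,\mathcal{O}_{E_s})$ factor being one-dimensional), no separate hypothesis on $E$ is needed.
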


\begin{proof}
Let $V$ be an $n$-dimensional projective variety over an algebraically closed field $k$ of characteristic zero with only rational singularities such that $K_V$ is linearly trivial. 
Let $C$ be an elliptic curve over $k$, and denote $W=V \times C$. 
We suppose given a model of $(V, C, W)$ over a finitely generated $\Z$-subalgebra $A$ of $k$. 
Applying Conjecture $\mathrm{B}_{n+1}$ to $W$, we can take a dense subset of closed points $S \subseteq \Spec A$ such that the Frobenius action on 
$$H^{n+1}(W_s, \sO_{W_s})=H^n(V_s, \sO_{V_s}) \otimes H^1(C_s, \sO_{C_s})$$
is bijective for every $s \in S$. 
This implies that the Frobenius action on $H^n(V_s, \sO_{V_s})$ is bijective for every $s \in S$. 
\end{proof}

\begin{lem}\label{conj B2}
Conjecture $\mathrm{B}_n$ holds true if $n \le 2$. 
\end{lem}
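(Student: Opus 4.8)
The plan is to prove Conjecture $\mathrm{B}_n$ for $n \le 2$ by reducing to known results on the ordinarity of abelian varieties, curves, and $K3$/Calabi--Yau surfaces. By Lemma \ref{n+1=>n} it suffices to treat $n = 2$ directly, since the cases $n = 0, 1$ either are trivial ($n=0$: the Frobenius on $H^0$ of a point is the identity) or follow from $n = 2$; alternatively one treats all three cases by hand. So fix a $2$-dimensional projective variety $V$ over an algebraically closed field $k$ of characteristic zero with only rational singularities and $K_V \sim 0$. The first step is to pass to a resolution: take $\pi : \widetilde V \to V$ a resolution of singularities. Since $V$ has rational singularities, $\pi_* \sO_{\widetilde V} = \sO_V$ and $R^i\pi_*\sO_{\widetilde V} = 0$ for $i>0$, so $H^2(V,\sO_V) \cong H^2(\widetilde V, \sO_{\widetilde V})$ compatibly with the Frobenius action; as in Remark \ref{remark on Conj B} (1) this isomorphism persists after reduction mod $p$ for $s$ in a dense open subset. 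Thus I may replace $V$ by the smooth surface $\widetilde V$, which has $K_{\widetilde V}$ effective and numerically trivial (it is $\pi^*K_V$ plus a non-negative combination of exceptional divisors), hence $\widetilde V$ has Kodaira dimension $0$.

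The second step is to invoke the classification of smooth projective surfaces of Kodaira dimension zero (in characteristic zero): after a further sequence of blow-downs we reduce to a minimal such surface, which is either an abelian surface, a bielliptic (hyperelliptic) surface, an Enriques surface, or a $K3$ surface. (Blowing down a $(-1)$-curve does not change $H^2(-,\sO)$ and is compatible with Frobenius after reduction, so this reduction is harmless.) For each of the four classes I must produce, for a given model over a finitely generated $\Z$-algebra $A \subseteq k$, a dense set of closed points $s$ at which the Frobenius on $H^2(V_s,\sO_{V_s})$ is bijective. For Enriques surfaces $H^2(\sO) = 0$ in characteristic zero and, after shrinking $\Spec A$, also at almost all $s$, so the statement is vacuous. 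For abelian surfaces, bijectivity of Frobenius on $H^2(\sO) = \wedge^2 H^1(\sO)$ is equivalent to ordinarity, and it is classical (Mumford, and in the stated form the combination of Ogus, Bogomolov--Zarhin, and Joshi--Rajan cited in the introduction) that an abelian variety over a number field has ordinary reduction at a density-one set of primes; bielliptic surfaces are isogenous quotients of products of elliptic curves and the same density-one ordinarity statement applies to the relevant $H^2$. For $K3$ surfaces, the density of ordinary reduction is exactly the content of the Bogomolov--Zarhin / Joshi--Rajan results (via the Kuga--Satake construction or directly), which is why the introduction attributes Conjecture $\mathrm{B}_2$ to precisely those three papers.

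The third step is purely bookkeeping: assemble these inputs. Given $V$ over $k$, choose $A$, pass to $\widetilde V$ and then to a minimal model $V^{\min}$ over $A$ (enlarging $A$ finitely many times so that all the blow-up/blow-down data and the isomorphisms on $H^2(-,\sO)$ are defined over $A$ and remain isomorphisms on fibers over a dense open $U \subseteq \Spec A$). Apply the appropriate density statement above to get a dense subset $S_0$ of closed points of $\Spec A$ (contained in $U$) at which Frobenius on $H^2(V^{\min}_s, \sO)$ is bijective. Then for $s \in S_0$ the chain of Frobenius-equivariant isomorphisms $H^2(V_s,\sO_{V_s}) \cong H^2(\widetilde V_s, \sO) \cong H^2(V^{\min}_s,\sO)$ transports bijectivity back to $V_s$, proving Conjecture $\mathrm{B}_2$; Lemma \ref{n+1=>n} then yields Conjecture $\mathrm{B}_n$ for all $n \le 2$.

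The main obstacle is the density of ordinary reduction, which is genuinely deep and not something one proves from scratch: for abelian surfaces it goes back to work of Ogus and of Bogomolov--Zarhin, and for $K3$ surfaces it is the theorem of Bogomolov--Zarhin complemented by Joshi--Rajan. Everything else --- the reduction to smooth minimal surfaces via rational singularities and Kodaira-dimension-zero classification, and the translation between ``Frobenius bijective on $H^2(\sO)$'' and ``ordinary'' --- is standard. So in the write-up I would state these external inputs explicitly as the citations already flagged in the introduction and keep the surface-classification reduction brief.
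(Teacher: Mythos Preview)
Your approach is essentially the paper's: resolve, use the surface classification, and quote the density-of-ordinary-reduction results of Ogus, Bogomolov--Zarhin, and Joshi--Rajan. Two points are worth tightening.

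First, you omit the reduction to $k=\overline{\Q}$. The density theorems you invoke are stated for abelian varieties and K3 surfaces over \emph{number fields}; when $k$ is an arbitrary algebraically closed field of characteristic zero, your model $A\subset k$ may have $\dim \Spec A>1$, and ``density of ordinary primes'' is not literally what those references give you. The paper handles this by first reducing to $k=\overline{\Q}$ via an argument of the type in \cite[Proposition~5.3]{MS}; you should make this step explicit rather than folding it into ``bookkeeping''.

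Second, the paper's reduction to the smooth case is shorter and avoids your detour through the full Kodaira-dimension-zero list. Since $K_V\sim 0$ is Cartier, $V$ is Gorenstein, so its rational singularities are Du Val and the \emph{minimal} resolution $\pi:\widetilde V\to V$ is crepant; thus $K_{\widetilde V}\sim 0$ on the nose and $\widetilde V$ is already minimal, hence an abelian surface or a K3 surface. Your claim that $K_{\widetilde V}$ is ``numerically trivial'' for an arbitrary resolution is false (blow up a smooth point of an abelian surface), and the Enriques and bielliptic cases never arise because $h^2(\widetilde V,\sO_{\widetilde V})=h^2(V,\sO_V)=h^0(V,\omega_V)=1$. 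None of this breaks your argument, but it shows you are carrying extra baggage the paper does not.
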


\begin{proof}
By an argument similar to the proof of \cite[Proposition 5.3]{MS}, we may assume that $k=\overline{\Q}$ without loss of generality. 
By Lemma \ref{n+1=>n}, it suffices to consider the case when $n=2$. 

Let $\pi:\widetilde{X} \to X$ be a minimal resolution. 
$\widetilde{X}$ is an abelian surface or a K3 surface. 
Suppose given a model of $\pi$ over a finitely generated $\Z$-subalgebra $A$ of $k$. 
Then there exists a dense subset of closed points $S \subseteq \Spec A$ such that the Frobenius action on $H^2(\widetilde{X}_{s}, \sO_{\widetilde{X}_{s}})$ is bijective for every $s \in S$ (the abelian surface case follows from a result of Ogus \cite{Og} and the K3 surface case follows from a result of Bogomolov--Zarhin \cite{BZ} or that of Joshi and Rajan \cite{JR}).  
Since $X$ has only rational singularities, by Example \ref{compatibility} (1), we may assume that $H^2(X_{s}, \sO_{X_{s}}) \cong H^2(\widetilde{X}_{s}, \sO_{\widetilde{X}_{s}})$ as $\kappa(s)[F]$-modules for all $s \in S$.
Thus, we obtain the assertion.  
\end{proof}

Our main result is stated as follows. 
\begin{thm}\label{main result}
Let $x \in X$ be a log canonical singularity defined over an algebraically closed 
field $k$ of characteristic zero
such that $x$ is an isolated non-log-terminal point of $X$. 
If Conjecture $\mathrm{B}_{\mu}$ holds true where $\mu=\mu(x \in X)$, then $x \in X$ is of dense $F$-pure type. 
In particular, if  $\mu(x \in X) \le 2$, then $x \in X$ is of dense $F$-pure type. 
\end{thm}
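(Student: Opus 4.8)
The plan is to reduce the general log canonical case to the index-one situation handled by Proposition \ref{key lemma}, and then to feed the geometry of the minimal strata of $E$ into Conjecture $\mathrm{B}_{\mu}$. First I would dispose of the index: by the remark following Definition \ref{def1.4}, if $\mu(x\in X)\le 2$ then the index of $x\in X$ is bounded, but in general we do not know this, so instead I would pass to the index one cover $\rho\colon X'\to X$ with $x'=\rho^{-1}(x)$. Since $\rho$ is finite and \'etale outside $x$, one checks that $x\in X$ is of dense $F$-pure type if $x'\in X'$ is, using that $F$-purity descends along split (in particular finite \'etale in the relevant range) morphisms after reduction mod $p$; and $\mu(x'\in X')=\mu(x\in X)$ by definition. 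So I may assume from the start that $K_X$ is Cartier at $x$, i.e. $X$ has index one at $x$.

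**Constructing the divisor $D$.**
Next I would produce a model to which Proposition \ref{key lemma} applies. Shrinking $X$ I may assume $X$ is affine, quasi-projective, that $x$ is the only non-log-terminal point, and—after a further shrink and noting that $X$ is log terminal hence canonical away from $x$ once $K_X$ is Cartier there—that $X$ is canonical outside $x$. Now apply Lemma \ref{dlt blow-ups} to get a projective birational $g\colon Z\to X$ with $K_Z+D=g^*K_X$, $(Z,D)$ a $\Q$-factorial dlt pair, $D$ reduced, and $g$ small (hence an isomorphism) outside $x$; by Lemma \ref{dlt blow-ups have only canonical singularities}, $Z$ has only canonical, in particular rational, singularities. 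Thus conditions (1)–(4) of Proposition \ref{key lemma} hold: $Z$ has rational singularities, $K_Z+D=g^*K_X\sim_g 0$, $g|_{Z\setminus D}$ is an isomorphism onto $X\setminus\{x\}$, and $\Supp D=\Supp g^{-1}(x)$. By that proposition, $x\in X$ is of dense $F$-pure type if and only if, for some model of $D$ over a finitely generated $\Z$-algebra $A\subseteq k$, there is a dense set of closed points $s$ for which Frobenius acts bijectively on $H^{n-1}(D_s,\sO_{D_s})$.

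**Identifying the target cohomology via the minimal strata.**
It remains to verify that last bijectivity statement, and this is where the hypothesis on $\mu$ and Conjecture $\mathrm{B}_{\mu}$ enter. Let $d=\dim D=n-1$. Using the dlt structure, run a minimal model program with scaling on $(Z,D)$ over $X$, as in \cite{Fu2}: this contracts all strata of $E$ (equivalently $D$) of dimension $>\mu$ and terminates with a model whose relevant stratum is a minimal one, a projective variety birational to a $\mu$-dimensional projective variety $V$ with only rational singularities and $K_V$ linearly trivial. The combinatorial (Mayer–Vietoris) input of Example \ref{compatibility}(3) together with Grauert–Riemenschneider-type vanishings—exactly the package used in the proof of Proposition \ref{key lemma}—lets me compute $H^{d}(D,\sO_D)$: all contributions from higher-dimensional strata vanish, and one is left with an $\sO_X[F]$-module isomorphism (after possibly enlarging $A$, via Example \ref{compatibility}(1)) $H^{d}(D_s,\sO_{D_s})\cong H^{\mu}(V_s,\sO_{V_s})$ for all closed points $s$. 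Then Conjecture $\mathrm{B}_{\mu}$ applied to $V$ furnishes a dense set $S$ of closed points at which Frobenius acts bijectively on $H^{\mu}(V_s,\sO_{V_s})$, hence on $H^{d}(D_s,\sO_{D_s})$, and Proposition \ref{key lemma} concludes that $x\in X$ is of dense $F$-pure type. The final sentence of the theorem is then immediate from Lemma \ref{conj B2}, since $\mu(x\in X)\le 2$ forces Conjecture $\mathrm{B}_{\mu}$ to hold.

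**Main obstacle.**
The technical heart—and the step I expect to be most delicate—is the MMP-with-scaling reduction showing that $H^{d}(D_s,\sO_{D_s})$ really is governed by a single minimal stratum $V$ with $K_V$ linearly trivial, compatibly with the Frobenius action after reduction mod $p$. This requires knowing that the MMP steps are isomorphisms in a neighborhood of the relevant stratum and that the $\sO_X[F]$-module identifications survive specialization, i.e. that the whole diagram (dlt blow-up, MMP with scaling, the vanishing theorems used) spreads out over a finitely generated $\Z$-algebra; the characteristic-zero geometry is from \cite{BCHM} and \cite{Fu2}, but one must be careful that enlarging $A$ finitely many times preserves all the cohomological isomorphisms and Frobenius-compatibilities simultaneously. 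By contrast, the index reduction and the application of Proposition \ref{key lemma} are comparatively routine.
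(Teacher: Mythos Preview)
Your overall architecture is right, but there is a genuine gap in the step where you apply Proposition~\ref{key lemma} directly to the dlt blow-up $g\colon (Z,D)\to X$. You assert that $g$ is ``small (hence an isomorphism) outside $x$'' and that $\Supp D=\Supp g^{-1}(x)$, but neither is justified by Lemma~\ref{dlt blow-ups}. A small projective birational morphism is not in general an isomorphism: if $X\setminus\{x\}$ is not $\Q$-factorial, the dlt blow-up may well involve a small $\Q$-factorialization over that locus, so condition~(3) of Proposition~\ref{key lemma} can fail. Likewise, nothing in the dlt blow-up forces the whole fiber $g^{-1}(x)$ to be supported on $D$, so condition~(4) is unverified. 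Without these, the identification $H^{n-1}(Z\setminus D,\sO_Z)\cong H^n_x(\sO_X)$ at the start of the proof of Proposition~\ref{key lemma} is not available.

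The paper repairs exactly this point with Proposition~\ref{MMP}: one runs a $K_Z$-minimal model program (not a $(K_Z+D)$-MMP) with scaling over $X$ and then passes to the relative canonical model $\widetilde g\colon(\widetilde Z,\widetilde D)\to X$. The $\widetilde g$-ampleness of $K_{\widetilde Z}$ (equivalently of $-\widetilde D$) is what forces $\Supp\widetilde D=\Supp\widetilde g^{-1}(x)$ and $\widetilde g|_{\widetilde Z\setminus\widetilde D}$ to be an isomorphism; the same MMP steps, via Grauert--Riemenschneider, give the $\sO_{X_s}[F]$-isomorphisms $H^{n-1}(D_s,\sO_{D_s})\cong H^{n-1}(\widetilde D_s,\sO_{\widetilde D_s})$. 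Only then is Proposition~\ref{key lemma} applied, to $(\widetilde Z,\widetilde D)$. Separately, the identification $H^{n-1}(D_s,\sO_{D_s})\cong H^{\mu}(V_s,\sO_{V_s})$ is not obtained by ``contracting strata via MMP'' as you sketch; the paper instead takes a Szab\'o-type log resolution $h\colon Y\to Z$ (Remark~\ref{szabo}), sets $E=h^{-1}_*D$, and uses \cite[Theorem~5.2, Remark~5.3]{Fu2} together with the Mayer--Vietoris compatibility of Example~\ref{compatibility}(3) to pass from $H^{n-1}(E)$ to $H^{\mu}(W)$ for a minimal stratum $W$, then descends to $V=h(W)$. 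So the MMP and the stratum identification are two distinct ingredients, and your proposal collapses them in a way that leaves the crucial hypotheses~(3)--(4) of Proposition~\ref{key lemma} unestablished.
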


We need the following proposition for the proof of Theorem \ref{main result}.  
\begin{prop}\label{MMP}
Let $x \in X$ be an $n$-dimensional log canonical singularity defined over an algebraically closed field $k$ of characteristic zero.
Suppose that the index of $X$ at $x$ is one and that $x$ is an isolated non-log-terminal point of $X$. 
Let $g:(Z, D) \to X$ be a dlt blow-up as in Lemma \ref{dlt blow-ups}. 
Then there exists a birational model $\widetilde{g}: (\widetilde{Z}, \widetilde{D}) \to X$ of $g$ which satisfies the following properties:
\begin{enumerate}
\item $\widetilde{Z}$ has only canonical singularities, 
\item $K_{\widetilde{Z}}+\widetilde{D}$ is linearly trivial over $X$,
\item $\widetilde{g}|_{\widetilde{Z} \setminus \widetilde{D}}:\widetilde{Z} \setminus \widetilde{D} \to X \setminus \{x\}$ is an isomorphism.  
\item $\Supp \widetilde{D}=\Supp {\widetilde{g}}^{-1}(x)$, 
\item given models of $D$ and $\widetilde{D}$ over a finitely generated $\Z$-subalgebra $A$ of $k$, 
enlarging $A$ if necessary, we may assume that 
$$H^{n-1}(D_{s}, \sO_{D_{s}}) \cong H^{n-1}({\widetilde{D}}_{s}, \sO_{{\widetilde{D}}_{s}})$$
as $\kappa(s)[F]$-modules for all closed points $s \in \Spec A$. 
\end{enumerate}
\end{prop}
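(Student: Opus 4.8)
The plan is to build $\widetilde g$ by running a relative minimal model program over $X$, starting from the dlt blow-up $g\colon(Z,D)\to X$ of Lemma~\ref{dlt blow-ups}, and then to verify the cohomological comparison~(5) step by step along that program. We may assume $X$ is affine, so that $K_X$ is Cartier. By Lemmas~\ref{dlt blow-ups} and~\ref{dlt blow-ups have only canonical singularities}, the pair $(Z,D)$ is $\Q$-factorial dlt, $Z$ has only canonical singularities, $K_Z+D=g^*K_X$ (hence $K_Z+D\sim_g 0$), and $g$ is small outside $x$. The ways in which $g$ can fail to satisfy~(3)--(4) are of exactly two kinds: (a) there may be $g$-exceptional prime divisors not contained in $D$ and lying over $x$ --- necessarily with discrepancy $0$ over $(X,0)$, since $K_Z-g^*K_X=-D$; and (b) $g$ may be only a small, not an isomorphic, birational morphism over $X\setminus\{x\}$. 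Both defects are of the sort a relative MMP over $X$ is built to remove.

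Two observations make the construction run. First, because $K_Z+D=g^*K_X$, \emph{every} step of \emph{any} MMP over $X$ is automatically crepant for $K_Z+D$: a contracted extremal curve $C$ satisfies $(K_Z+D)\cdot C=g^*K_X\cdot C=0$, so flips and divisorial contractions over $X$ are $(K_Z+D)$-trivial. Consequently, along the program the pair stays dlt, the divisor $K_Z+D$ stays Cartier and $g$-linearly trivial, the boundary stays a reduced crepant transform of $D$, and --- by the argument of Lemma~\ref{dlt blow-ups have only canonical singularities} --- the underlying variety stays canonical; thus~(1) and~(2) hold automatically for whatever model the program outputs. Second, what has to be engineered by the \emph{choice} of program is that it terminates, that it contracts precisely the superfluous exceptional divisors in~(a) while leaving the components of $D$ (the log canonical places) untouched, and that the output is an isomorphism over $X\setminus\{x\}$. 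This is arranged by running an MMP with scaling over $X$ along the lines of \cite{Fu2}, with \cite{BCHM} supplying existence and termination after perturbing by a small $g$-ample divisor (the scaling is what forces the program to move, since $K_Z+D$ is already $g$-trivial). Call the resulting model $\widetilde g\colon(\widetilde Z,\widetilde D)\to X$; it then satisfies~(1)--(4). I expect this last bookkeeping --- choosing the boundary and scaling so that exactly the discrepancy-zero exceptional divisors over $x$ are contracted, the log canonical places survive, and the output is identified with $X$ over $X\setminus\{x\}$ --- to be the main technical obstacle, and the point where Fujino's method from \cite{Fu2} is indispensable.

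It remains to prove~(5), where the MMP is reduced to a bookkeeping of cohomology. Write the program as a finite chain of flips and divisorial contractions $(Z_0,D_0)\dashrightarrow\cdots\dashrightarrow(Z_N,D_N)$ over $X$ with $(Z_0,D_0)=(Z,D)$ and $(Z_N,D_N)=(\widetilde Z,\widetilde D)$; each $Z_i$ is canonical, hence has rational singularities, and $K_{Z_i}+D_i\sim_{g_i}0$. For each $i$, the short exact sequence $0\to\sO_{Z_i}(-D_i)\to\sO_{Z_i}\to\sO_{D_i}\to0$, together with the facts that $-D_i$ is $g_i$-linearly equivalent to $K_{Z_i}$ (so $R^{n-1}g_{i*}\sO_{Z_i}(-D_i)=0$ by Grauert--Riemenschneider vanishing for the variety $Z_i$, which has rational singularities) and that $R^ng_{i*}(-)=0$ since $g_i$ is birational, yields an $\sO_X[F]$-module isomorphism $H^{n-1}(D_i,\sO_{D_i})\cong H^{n-1}(Z_i,\sO_{Z_i})$. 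Resolving the graph of $Z_i\dashrightarrow Z_{i+1}$ by a common resolution $W_i$ and using that $Z_i$ and $Z_{i+1}$ have rational singularities, the maps $W_i\to Z_i$ and $W_i\to Z_{i+1}$ induce $\sO_X[F]$-module isomorphisms $H^{n-1}(Z_i,\sO_{Z_i})\cong H^{n-1}(W_i,\sO_{W_i})\cong H^{n-1}(Z_{i+1},\sO_{Z_{i+1}})$, by functoriality of the $\sO_X[F]$-structure (Example~\ref{compatibility}~(1)). Composing these over $i=0,\dots,N$ gives an $\sO_X[F]$-module isomorphism $H^{n-1}(D,\sO_D)\cong H^{n-1}(\widetilde D,\sO_{\widetilde D})$. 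Finally, given models of $D$ and $\widetilde D$ over a finitely generated $\Z$-subalgebra $A$ of $k$, we may --- after enlarging $A$ --- spread out the finitely many morphisms $W_i\to Z_i$ and $W_i\to Z_{i+1}$ together with all the vanishing statements and cohomology-and-base-change isomorphisms used above; generic freeness then provides, for every closed point $s$ in a dense open subset of $\Spec A$, a $\kappa(s)[F]$-module isomorphism $H^{n-1}(D_s,\sO_{D_s})\cong H^{n-1}(\widetilde D_s,\sO_{\widetilde D_s})$, which is~(5).
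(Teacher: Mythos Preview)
Your treatment of property~(5) is correct and, in fact, a bit cleaner than the paper's. The paper splits into two claims (flips vs.\ divisorial contractions) and compares $H^{n-1}(D_i,\sO_{D_i})$ directly with $H^{n-1}(D_{i+1},\sO_{D_{i+1}})$ by pushing both $\sO_{D_i}$ and $\sO_{D_{i+1}}$ down to the flipping base (or target of the contraction); this forces it to check $\phi_i(D_i)=D_{i+1}$ in the divisorial case. Your route through $H^{n-1}(D_i,\sO_{D_i})\cong H^{n-1}(Z_i,\sO_{Z_i})$ (via Grauert--Riemenschneider on $\sO_{Z_i}(-D_i)\cong\omega_{Z_i}$) and a common resolution of $Z_i$ and $Z_{i+1}$ treats all steps uniformly and needs only that each $Z_i$ has rational singularities. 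That is a genuine simplification.

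The real gap is in the construction of $\widetilde g$. You never say \emph{which} MMP you run, and the appeal to \cite{Fu2} is misdirected: the program in \cite{Fu2} is a $(K_Y+E)$-MMP used to build the dlt blow-up $(Z,D)$ itself, not a further model satisfying (3)--(4). Your own remark that ``$K_Z+D$ is already $g$-trivial'' shows a $(K_Z+D)$-program does nothing. The paper's choice is decisive: run a $K_Z$-MMP with scaling over $X$, terminating (by \cite{BCHM}) at $Z'$ with $K_{Z'}$ nef over $X$, and then pass to the relative canonical model $\widetilde Z$ via the basepoint-free theorem, so that $K_{\widetilde Z}$ is $\widetilde g$-\emph{ample}. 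This ampleness is what delivers (3) and (4) at once: since $K_{\widetilde Z}+\widetilde D\sim_{\widetilde g}0$, the divisor $-\widetilde D$ is $\widetilde g$-ample, forcing $\Supp\widetilde D=\Supp\widetilde g^{-1}(x)$; and over $X\setminus\{x\}$, writing $K_{\widetilde Z\setminus\widetilde D}=\widetilde g^*K_{X\setminus\{x\}}+F$ with $F\ge0$ exceptional (as $X\setminus\{x\}$ is canonical), ampleness forces $F=0$ and then finiteness, hence $\widetilde g$ is an isomorphism there. None of this emerges from a vague ``MMP with scaling''.

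A related inaccuracy: you assert that ``the pair stays dlt'' along any MMP over $X$ and invoke Lemma~\ref{dlt blow-ups have only canonical singularities} to conclude each $Z_i$ is canonical. Dlt is not preserved under arbitrary $(K_Z+D)$-crepant modifications, so this reasoning is circular. The paper's argument is simpler and correct: because one runs a $K_Z$-MMP, canonical singularities of the underlying variety are preserved at each step, which is exactly what your cohomology comparison needs.
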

\begin{proof}
We may assume that $X$ is affine and $K_X$ is Cartier. 
We run a $K_Z$-minimal model program over $X$ with scaling (see \cite{BCHM} for the minimal model program with scaling). 
Then we obtain a sequence of divisorial contractions and flips:
$$
\begin{xy}
(-10, 0) *{Z}="Z", (0,0) *{Z_0}="Z0", (18,0)*{Z_1}="Z1", (36,0)*{}="Z2", (46,0)*{}="Zk-2", (64,0)*{Z_{k-1}}="Zk-1", (82,0)*{Z_{k}}="Zk", (92,0)*{Z'}="Z'",
(-10,-10) *{D}="D", (0,-10) *{D_0}="D0", (18,-10)*{D_1}="D1", (36,-10)*{}="D2", (46,-10)*{}="Dk-2", (64,-10)*{D_{k-1}}="Dk-1", (82,-10)*{D_{k}}="Dk",(92,-10)*{D'}="D'",
\ar@{=} "Z"; "Z0"
\ar@{-->} "Z0"; "Z1"^{\phi_0}
\ar@{-->} "Z1"; "Z2"^{\phi_1}
\ar@{.} "Z2"; "Zk-2"
\ar@{-->} "Zk-2"; "Zk-1"^/-2mm/{\phi_{k-2}}
\ar@{-->} "Zk-1"; "Zk"^{\phi_{k-1}}
\ar@{=} "Zk"; "Z'"
\ar@{=} "D"; "D0"
\ar@{-->} "D0"; "D1"
\ar@{-->} "D1"; "D2"
\ar@{.} "D2"; "Dk-2"
\ar@{-->} "Dk-2"; "Dk-1"
\ar@{-->} "Dk-1"; "Dk"
\ar@{=} "Dk"; "D'"
{\ar@{^{(}->}  (0,-6.5); (0,-2)}
{\ar@{^{(}->}  (17,-6.5); (17,-2)}
{\ar@{^{(}->}  (59.5,-6.5); (59.5,-2)}
{\ar@{^{(}->}  (78,-6.5); (78,-2)}
\end{xy}
$$
such that $K_{Z'}$ is nef over $X$. 
Suppose given a model of the above sequence over a finitely generated $\Z$-subalgebra $A$ of $k$. 

\begin{cln}\label{flip}
Assume that $\phi_i: Z_i \dashrightarrow Z_{i+1}$ is a flip. 
Enlarging $A$ if necessary, we may assume that 
$$H^j(D_{i_s}, \sO_{D_{i_s}}) \cong H^j(D_{i+1,s}, \sO_{D_{i+1, s}})$$ 
as $\sO_{X_s}[F]$-modules for all $j$ and all closed points $s \in \Spec A$.  
\end{cln}
\begin{proof}[Proof of Claim \ref{flip}]
We consider the following flipping diagram:
$$
\xymatrix{
Z_i \ar@{-->}[rr]^{\phi_i} \ar[dr]_{\psi_i} & & Z_{i+1} \ar[dl]^{\psi_{i+1}}\\
& W_i & 
}
$$
Enlarging $A$ if necessary, we may assume that a model of the above diagram over $A$ is given. 
Note that $K_{Z_i}+D_i \sim_{\psi_i} 0$ and $K_{Z_{i+1}}+D_{i+1} \sim_{\psi_{i+1}} 0$.
Then we have the following exact sequences:
\begin{align*}
0 \to \sO_{Z_i}(K_{Z_i}) \to & \sO_{Z_i} \to \sO_{D_i} \to 0,\\
0 \to \sO_{Z_{i+1}}(K_{Z_{i+1}}) \to & \sO_{Z_{i+1}} \to \sO_{D_{i+1}} \to 0.
\end{align*}
We put $C_i=\psi_i(D_i)=\psi_{i+1}(D_i) \subseteq W_i$. 
Since $Z_i, Z_{i+1}$ and $W_i$ each has only rational singularities, by the Grauert--Riemenschneider vanishing theorem, one has 
$$\mathbf{R}{\psi_{i}}_*\sO_{D_i} \cong \sO_{C_i} \cong \mathbf{R}{\psi_{i}}_*\sO_{D_{i+1}}$$
in the derived category of coherent sheaves on $C_i$. 
Therefore, $\psi_i$ and $\psi_{i+1}$ induce the isomorphisms
$$H^j(D_i, \sO_{D_i}) \overset{{\psi_i}_*}{\cong} H^j(C_i, \sO_{C_i}) \overset{{\psi_{i+1}}_*}{\cong} H^j(D_{i+1}, \sO_{D_{i+1}})$$
for all $j$. 
By Example \ref{compatibility} (1), after possibly enlarging $A$, we may assume that 
$$H^j(D_{i_s}, \sO_{D_{i_s}}) \cong H^j(D_{i+1,s}, \sO_{D_{i+1, s}})$$ 
as $\sO_{X_s}[F]$-modules for all closed points $s \in \Spec A$. 
\end{proof}

\begin{cln}\label{divisorial contraction}
Assume that $\phi_i: Z_i \dashrightarrow Z_{i+1}$ is a divisorial contraction. 
Enlarging $A$ if necessary, we may assume that
$$H^j(D_{i_s}, \sO_{D_{i_s}}) \cong H^j(D_{i+1,s}, \sO_{D_{i+1, s}})$$ 
as $\sO_{X_s}[F]$-modules for all $j$ and all closed points $s \in \Spec A$.  
\end{cln}

\begin{proof}[Proof of Claim \ref{divisorial contraction}]
Let $E$ be the $\phi_i$-exceptional prime divisor on $Z_i$. 
First we will check that $\phi_i(D_i)=D_{i+1}$. 
It is obvious when $E$ is not an irreducible component of $D_i$, so we consider the case when $E$ is an irreducible component of $D_i$. 
Since $K_{Z_i}+D_i$ and $K_{Z_{i+1}}+D_{i+1}$ both are linearly trivial over $X$, we have 
$$K_{Z_i}+D_i=\phi_i^*(K_{Z_{i+1}}+D_{i+1}).$$
Hence $\phi_i(E)$ is a log canonical center of the pair $(Z_{i+1}, D_{i+1})$. 
Each $Z_i$ has only canonical singularities, because $Z$ has only canonical singularities by Lemma \ref{dlt blow-ups have only canonical singularities} and we run a $K_Z$-minimal model program. 
Thus, $\phi_i(E)$ has to be contained in $D_{i+1}$, which implies that $\phi_i(D_i)=D_{i+1}$. 

By an argument analogous to the proof of Claim $1$ (that is, by the Grauert--Riemenschneider vanishing theorem), we have $\mathbf{R}{\phi_i}_*\sO_{D_i} \cong \sO_{D_{i+1}}$ in the derived category of coherent sheaves on $D_{i+1}$. 
Therefore, $\phi_i$ induces the isomorphism
$$H^j(D_i, \sO_{D_i}) \overset{{\phi_i}_*}{\cong} H^j(D_{i+1}, \sO_{D_{i+1}})$$ 
for all $j$. 
By Example \ref{compatibility} (1), after possibly enlarging $A$, we may assume that 
$$H^j(D_{i_s}, \sO_{D_{i_s}}) \cong H^j(D_{i+1,s}, \sO_{D_{i+1, s}})$$ 
as $\sO_{X_s}[F]$-modules for all $s \in \Spec A$. 
\end{proof}

Let $g':(Z', D') \to X$ be the output of the minimal model program. 
By the base point free theorem, we obtain the following diagram
$$
\xymatrix{
(Z', D') \ar[rr]^{\pi} \ar[rd]_{g'} & & (\widetilde{Z}, \widetilde{D}) \ar[ld]^{\widetilde{g}}\\
& X &\\
}
$$
such that $\widetilde{Z}$ is the canonical model of $Z'$ over $X$ and that 
$$K_{Z'}+D'=\pi^*(K_{\widetilde{Z}}+\widetilde{D}).$$
Enlarging $A$ if necessary, we may assume that a model of the above diagram over $A$ is given. 
By an argument similar to the proof of Claim \ref{divisorial contraction},  we can check that $\pi(D')=\widetilde{D}$ and $\mathbf{R}\pi_*\sO_{D'} \cong \sO_{\widetilde{D}}$ in the derived category of coherent sheaves on $\widetilde{D}$. 
Thus, $\pi$ induces the isomorphism 
$$H^{n-1}(D', \sO_{D'}) \overset{\pi_*}{\cong} H^{n-1}(\widetilde{D}, \sO_{\widetilde{D}}).$$ 
By Example \ref{compatibility} (1), after possibly enlarging $A$, we may assume that 
$$H^{n-1}(D_{s}', \sO_{D_{s}'}) \cong H^{n-1}(\widetilde{D}_{s}, \sO_{\widetilde{D}_{s}})$$
as $\sO_{X_s}[F]$-modules for all closed points $s \in \Spec A$. 

Summing up the above arguments, we know that $\widetilde{g}:(\widetilde{Z}, \widetilde{D}) \to X$ has the following properties: 
\begin{enumerate}[(i)]
\item $\widetilde{Z}$ has only canonical singularities, 
\item $K_{\widetilde{Z}}+\widetilde{D} \sim_{\widetilde{g}} 0$,
\item $K_{\widetilde{Z}}$ is $\widetilde{g}$-ample,
\item $H^{n-1}(D_{s}, \sO_{D_{s}}) \cong H^{n-1}({\widetilde{D}}_{s}, \sO_{{\widetilde{D}}_{s}})$
for all closed points $s \in \Spec A$. 
\end{enumerate}
Since $-\widetilde{D}$ is $\widetilde{g}$-ample by (i) and (ii), one has $\Supp \widetilde{D}=\Supp \widetilde{g}^{-1}(x)$. 
Therefore, it remains to show that $\widetilde{g}$ is an isomorphism outside $x$.  
Note that ${X \setminus \{x \}}$ has only canonical singularities.
Then we can write 
$$K_{\widetilde{Z} \setminus \widetilde{D}}=g^*K_{X \setminus \{x \}}+F,$$
where $F$ is a $\widetilde{g}$-exceptional effective $\Q$-divisor on $\widetilde{Z} \setminus \widetilde{D}$.  
Since $K_{\widetilde{Z} \setminus \widetilde{D}}$ is $\widetilde{g}$-ample, one has $F=0$.   
Again, by the $\widetilde{g}$-ampleness of $K_{\widetilde{Z} \setminus \widetilde{D}}$, the birational morphism $\widetilde{g}: \widetilde{X} \setminus \widetilde{D} \to X \setminus \{x\}$ has to be finite, that is, an isomorphism.  
\end{proof}

Now we start the proof of Theorem \ref{main result}. 
\begin{proof}[Proof of Theorem \ref{main result}]
Since $F$-purity and log canonicity are preserved under index 
one covers (see \cite{Wa2} for $F$-purity and \cite[Proposition 5.20]{KM} for log canonicity), 
we may assume that the index of $X$ at $x$ is one. 
We can also assume that $X$ is affine and $K_X$ is Cartier.   

By Lemma \ref{dlt blow-ups}, there exists a birational projective morphism $g:Z \to X$ 
and a reduced divisor $D$ on $Z$ such that $K_Z+D=g^*K_X$, $(Z, D)$ is a $\Q$-factorial dlt pair 
and $g$ is a small morphism outside $x$. 
By Remark \ref{szabo}, there exists a projective birational morphism 
$h:Y \to Z$ from a smooth variety $Y$ with the following properties: 
\begin{enumerate}
\item $\mathrm{Exc}(h)$ and $\mathrm{Exc}(h) \cup \Supp h^{-1}_*D$ are simple normal crossing divisors on $Y$, 
\item $h$ is an isomorphism over the generic point of any log canonical center of the pair $(Z, D)$, 
\item $a(E, Z, D) >-1$ for every $h$-exceptional divisor $E$. 
\end{enumerate}
Then we can write 
$$K_Y=h^*(K_Z+D)+F-E,$$
where $E$ and $F$ are effective divisors on $Y$ which have no common irreducible components. 
By the construction of $h$, $E$ is a reduced simple normal crossing divisor on $Y$ and $E=h^{-1}_*D$. 
It follows from \cite[Corollary 4.15]{Fu3} or \cite[Corollary 2.5]{Fu2} that $\mathbf{R}h_*\sO_E \cong \sO_D$ in the derived category of coherent sheaves on $D$. 
Therefore, we have the isomorphism 
$$H^i(E, \sO_E) \overset{h_*}{\cong} H^i(D, \sO_D)$$ for every $i$. 
Suppose given models of $D$ and $E$ over a finitely generated $\Z$-subalgebra $A$ of $k$. 
By Example \ref{compatibility} (1), after possibly enlarging $A$, we may assume that 
$$H^i(E_s, \sO_{E_s}) \cong H^i(D_s, \sO_{D_s})$$ 
as $\sO_{X_s}[F]$-modules for all closed points $s \in \Spec A$. 

Let $W$ be a minimal stratum of a simple normal crossing variety $E$. 
By an argument similar to \cite[4.11]{Fu2} (see also Section \ref{sec4}), 
one has $\dim W=\mu$. 
Since $K_Z+D$ is linearly trivial over $X$ and $D$ is a $g$-exceptional divisor on $Z$, by the adjunction formula, one has $K_D \sim 0$. 
We also note that $D$ is sdlt (see \cite[Definition 1.1]{Fu0} for the definition of sdlt varieties). 
Applying \cite[Remark 5.3]{Fu2} to $h:E \to D$, we obtain the following claim.

\begin{cl}\label{MV sequence}
Suppose that models of $W$ and $E$ over $A$ are given.  
Then after possibly enlarging $A$, we may assume that 
$$H^{n-1}(E_s, \sO_{E_s}) \cong H^{\mu}(W_s, \sO_{W_s})$$
as $\sO_{X_s}[F]$-modules for all closed points $s \in \Spec A$. 
\end{cl}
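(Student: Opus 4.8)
The plan is to combine the characteristic-zero isomorphism $H^{n-1}(E,\sO_E)\cong H^{\mu}(W,\sO_W)$ furnished by \cite[Remark 5.3]{Fu2} with the $\sO_Z[F]$-module formalism of Example \ref{compatibility}, and then to spread the resulting construction out over $A$. First I would recall the mechanism behind the characteristic-zero isomorphism, since this tells us what must be spread out. Writing $E=\bigcup_{i\in I}E_i$, the structure sheaf of the simple normal crossing variety $E$ has the standard resolution $0\to\sO_E\to\bigoplus_{|J|=1}\sO_{E_J}\to\bigoplus_{|J|=2}\sO_{E_J}\to\cdots$ by structure sheaves of the (smooth projective) strata $E_J=\bigcap_{i\in J}E_i$, which produces a Mayer--Vietoris spectral sequence $E_1^{p,q}=\bigoplus_{|J|=p+1}H^q(E_J,\sO_{E_J})\Rightarrow H^{p+q}(E,\sO_E)$. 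Since a transverse $(p+1)$-fold intersection $E_J$ has dimension $n-1-p$, the abutment in the top degree $n-1$ is assembled from the top cohomology groups of the strata; by Serre duality on the strata, adjunction along $E$, and the minimality of $\mu$, only the $\mu$-dimensional minimal strata contribute, and the edge homomorphisms of the spectral sequence identify $H^{n-1}(E,\sO_E)$ with $H^{\mu}(W,\sO_W)$ for a minimal stratum $W$. This is precisely \cite[Remark 5.3]{Fu2} applied to $h\colon E\to D$.

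The point I would then stress is that every arrow occurring in this identification is of a type that, after reduction modulo $p$, becomes a morphism of $\sO_{X_s}[F]$-modules by Example \ref{compatibility}: the resolution of $\sO_E$ and its spectral sequence are built from iterated Mayer--Vietoris sequences (Example \ref{compatibility} (2), (3)) for the closed subschemes $E_i\subseteq E$; the edge homomorphisms and the maps $H^j(E_J,\sO_{E_J})\to H^j(E,\sO_E)$ comparing the cohomology of $E$ with that of its strata are pushforwards (Example \ref{compatibility} (1)); and the statements that collapse the spectral sequence onto $H^{\mu}(W,\sO_W)$ are vanishings of coherent cohomology groups. Thus the characteristic-zero isomorphism $H^{n-1}(E,\sO_E)\cong H^{\mu}(W,\sO_W)$, a priori only an isomorphism of $\sO_X$-modules, is realized as a composite of finitely many maps each of which reduces modulo $p$ to an $\sO_{X_s}[F]$-module map, taking the base scheme $Z$ in Example \ref{compatibility} to be the affine $F$-finite scheme $X_s$.

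I would then carry out the reduction, which is routine. Starting from the given models of $W$ and $E$ over $A$, enlarge $A$ so as to fix in addition models of $Y$, of the components $E_i$, and of $h$, with $X_A$ affine over $A$. By generic freeness, after further enlarging $A$ I may assume: that the combinatorial type of the simple normal crossing stratification of $E$ is constant over $\Spec A$, so that $W_s$ is a minimal stratum of $E_s$ for every closed point $s$; that the formation of each coherent cohomology group intervening in the argument commutes with base change to every closed fibre; that each of the finitely many maps above restricts fibrewise to the analogous map for $X_s$; and that each of the finitely many vanishing statements holds on every closed fibre. Under these hypotheses the Mayer--Vietoris spectral sequence for $E_s$ is the base change of the one over $A$, has the same differentials, and hence the same $E_{\infty}$-term, so the specialization of the characteristic-zero isomorphism is, for every closed point $s\in\Spec A$, an isomorphism $H^{n-1}(E_s,\sO_{E_s})\cong H^{\mu}(W_s,\sO_{W_s})$ of $\sO_{X_s}[F]$-modules; this is the same as a $\kappa(s)[F]$-module isomorphism, since $\m_{x_s}$ annihilates both sides.

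The step I expect to be the main obstacle is the assertion, hidden in the appeal to \cite[Remark 5.3]{Fu2}, that the collapse of the spectral sequence onto $H^{\mu}(W,\sO_W)$ is achieved purely by vanishing of coherent cohomology and by maps of the type in Example \ref{compatibility}, rather than by a transcendental Hodge-theoretic argument that would not survive reduction modulo $p$; this is exactly the reason one passes through the simple normal crossing model $E$, via $h$, before invoking \cite[Remark 5.3]{Fu2}. Once that is granted, the only remaining care is the usual bookkeeping for cohomology-and-base-change applied to the finitely many top and lower cohomology groups that appear, which is secured after inverting finitely many elements of $A$.
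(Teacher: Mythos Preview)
Your proposal is correct and matches the paper's approach; the paper's own proof is simply the two-line version that cites \cite[Theorem~5.2 and Remark~5.3]{Fu2} for the characteristic-zero fact that $H^{\mu}(W,\sO_W)\cong\cdots\cong H^{n-1}(E,\sO_E)$ is a chain of connecting homomorphisms of Mayer--Vietoris exact sequences, and then invokes Example~\ref{compatibility}~(3), without unpacking the spectral-sequence formalism you describe. Regarding the ``main obstacle'' you flag: the Hodge-theoretic input in \cite{Fu2} is used only over $k$ to certify that these connecting maps are isomorphisms, and once this is known in characteristic zero it spreads out by cohomology and base change; the Frobenius compatibility of the maps themselves is a separate, purely formal consequence of Example~\ref{compatibility}~(3), so nothing transcendental is needed after reduction.
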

\begin{proof}[Proof of Claim]
It follows from \cite[Theorem 5.2 and Remark 5.3]{Fu2} that  
$$H^{\mu}(W, \sO_{W}) \cong \dots \cong H^{n-1}(E, \sO_{E}),$$
where each isomorphism is the connecting homomorphism of a suitable Mayer--Vietoris exact sequence. 
Then by Example \ref{compatibility} (3), after possibly enlarging $A$, we may assume that 
$$H^{n-1}(E_s, \sO_{E_s}) \cong \cdots \cong H^{\mu}(W_s, \sO_{W_s})$$
as $\sO_{X_s}[F]$-modules for all closed points $s \in \Spec A$. 
\end{proof}

Let $V=h(W) \subseteq D$. Then $V$ is a minimal log canonical center of the pair $(Z, D)$. 
On the other hand, by adjunction formula for dlt pairs, we obtain $K_V=(K_Z+D)|_V \sim 0$. 
Thus, $V$ has only Gorenstein rational singularities. 
Since $h:W \to V$ is birational by the construction of $h$, one has the isomorphism 
$$H^\mu(W, \sO_W) \overset{h_*}{\cong} H^{\mu}(V, \sO_V).$$
Suppose models of $W$ and $V$ are given over $A$. 
By Example \ref{compatibility} (1),  after possibly enlarging $A$, we may assume that 
$$H^{\mu}(W_s, \sO_{W_s}) \cong H^{\mu}(V_s, \sO_{V_s})$$ 
as $\sO_{X_s}[F]$-modules for all closed points $s \in \Spec A$. 

Now we sum up the above arguments together with Proposition \ref{MMP} 
(we use the same notation as in Proposition \ref{MMP}). 
Suppose given models of $\widetilde{D}$ and $V$ over a finitely generated $\Z$-subalgebra $A$ of $k$. 
Then after possibly enlarging $A$, we may assume that 
$$H^{\mu}(V_s, \sO_{V_s}) \cong H^{n-1}(\widetilde{D}_s, \sO_{{\widetilde{D}}_s})$$ 
as $\sO_{X_s}[F]$-modules for all closed points $s \in \Spec A$. 
It follows from an application of Conjecture $\mathrm{B}_{\mu}$ to $V$ that there exists a dense subset of closed points $S \subseteq \Spec A$ such that the natural Frobenius action on $H^{\mu}(V_s, \sO_{V_s})$ is bijective for all $s \in S$. 
Then the Frobenius action on $H^{n-1}(\widetilde{D}_s, \sO_{{\widetilde{D}}_s})$ is also bijective for all closed points $s \in S$, which implies by Proposition \ref{key lemma} that $x \in X$ is of dense $F$-pure type. 
\end{proof}

\begin{rem} 
Let $f:Y\to X$ be any resolution as in Definition \ref{def1.4}. 
By the uniqueness of the relative canonical model, we have 
$$
\widetilde Z\cong \mathrm{Proj} \bigoplus _{m\geq 0} f_*\mathcal O_Y(mK_Y)
$$ 
over $X$. 
Unfortunately, by this construction, it is not clear how to relate the cohomology group  
$H^{n-1}(D_s, \mathcal O_{D_s})$ to $H^{n-1}(\widetilde D_s, \mathcal O_{{\widetilde D}_s})$. 
Moreover, the relationship between $\widetilde D$ and a minimal stratum of $E$ in Definition 
\ref{def1.4} is also not clear. 
Therefore, we take a dlt blow-up and run a minimal model program with scaling to construct $\widetilde Z$. 
\end{rem}

\begin{cor}\label{equivalence of conjectures}
Conjecture $\mathrm{A}_{n+1}$ is equivalent to Conjecture $\mathrm{B}_n$. 
\end{cor}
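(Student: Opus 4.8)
The plan is to prove the two implications separately. Recall first that by Hara--Watanabe \cite{HW} the implication ``dense $F$-pure type $\Rightarrow$ log canonical'' of Conjecture $\mathrm{A}_{n+1}$ is already known unconditionally, so in each direction only the reverse implication is at stake.

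\emph{Conjecture $\mathrm{B}_n$ implies Conjecture $\mathrm{A}_{n+1}$.} Let $x\in X$ be as in Conjecture $\mathrm{A}_{n+1}$ and suppose $x\in X$ is log canonical. The invariant $\mu:=\mu(x\in X)$ is computed on a resolution of the index one cover of $X$ at $x$, which is again $(n+1)$-dimensional; there the relevant divisor $E$ has dimension $n$, so every stratum of $E$ has dimension at most $n$ and hence $\mu\le n$. Applying Lemma \ref{n+1=>n} repeatedly, Conjecture $\mathrm{B}_n$ implies Conjecture $\mathrm{B}_\mu$, and then Theorem \ref{main result} gives that $x\in X$ is of dense $F$-pure type. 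Combined with Hara--Watanabe, this proves Conjecture $\mathrm{A}_{n+1}$.

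\emph{Conjecture $\mathrm{A}_{n+1}$ implies Conjecture $\mathrm{B}_n$.} Here I would use a cone construction. Let $V$ be as in Conjecture $\mathrm{B}_n$; since $V$ is projective with only rational singularities and $K_V$ linearly trivial, $V$ is normal and Gorenstein with canonical (in particular klt) singularities and $\omega_V\cong\sO_V$. Choose a very ample line bundle $L$ on $V$ for which $R=\bigoplus_{m\ge 0}H^0(V,L^{\otimes m})$ is a normal domain generated in degree one, and set $X=\Spec R$ with vertex $x$; then $X$ is an $(n+1)$-dimensional normal Gorenstein variety with $K_X\sim 0$. Let $g\colon Z\to X$ be the blow-up of $x$, so that $Z$ is the total space of $L^{-1}$ over $V$, has only rational singularities (it is smooth over $V$), and its zero section $D\cong V$ is a Cartier divisor with $K_Z+D=g^*K_X\sim_g 0$ (the discrepancy of $D$ over $X$ is $-1$, by adjunction on $D$), with $g|_{Z\setminus D}\colon Z\setminus D\to X\setminus\{x\}$ an isomorphism and $\Supp D=\Supp g^{-1}(x)$. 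Since $D\cong V$ is klt, inversion of adjunction shows that $(Z,D)$ is plt in a neighbourhood of $D$, so $D$ is the only divisor over $X$ with discrepancy $-1$; as $g(D)=x$ this means $x\in X$ is a log canonical singularity with $x$ an isolated non-log-terminal point. Conjecture $\mathrm{A}_{n+1}$ therefore applies and yields that $x\in X$ is of dense $F$-pure type. Now Proposition \ref{key lemma}, applied with $Z$ and $D$ as above (so the ``$n$'' there is $n+1$), says precisely that for a model of $D$ over a finitely generated $\Z$-subalgebra $A$ of $k$ there is a dense subset $S\subseteq\Spec A$ on which the Frobenius action on $H^{(n+1)-1}(D_s,\sO_{D_s})=H^n(D_s,\sO_{D_s})$ is bijective. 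Since $D\cong V$ and this conclusion does not depend on the chosen model (by the same spreading-out argument giving independence of dense $F$-pure type from a model), Conjecture $\mathrm{B}_n$ holds for $V$.

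The cone-theoretic bookkeeping used above --- normality and finite generation of $R$, the identification of $Z$ with the total space of $L^{-1}$ and of the exceptional set with the zero section $\cong V$, and the equality $K_Z+D=g^*K_X$ --- is routine, as is the comparison of different models of one variety. The one point that genuinely needs care, and is in effect the crux of this direction, is verifying that the cone $x\in X$ is log canonical with $x$ an \emph{isolated} non-log-terminal point, i.e.\ that no divisor over $X$ other than $D$ has discrepancy $\le -1$; this is exactly where the hypothesis that $V$ has only canonical singularities enters, through inversion of adjunction for $(Z,D)$ along $D$.
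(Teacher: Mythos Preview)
Your proof is correct, and the direction $\mathrm{B}_n \Rightarrow \mathrm{A}_{n+1}$ is identical to the paper's.

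For $\mathrm{A}_{n+1} \Rightarrow \mathrm{B}_n$ you take a slightly different route from the paper. The paper also passes to the affine cone $R=\bigoplus_{m\ge 0}H^0(V,\sO_V(mD))$ over $V$, but it simply quotes \cite[Proposition 5.4]{SS} for the fact that $\Spec R$ is quasi-Gorenstein log canonical with an isolated non-log-terminal vertex, and then deduces the Frobenius bijectivity on $H^n(V_s,\sO_{V_s})$ directly from the injectivity of Frobenius on $H^{n+1}_{\m_{R_s}}(R_s)$, using that $H^n(V_s,\sO_{V_s})$ is the degree-zero part of that local cohomology module. You instead realise the blow-up of the vertex as the total space of $L^{-1}$, verify the singularity type of the cone by hand via inversion of adjunction on $(Z,D)$, and then feed this data into Proposition~\ref{key lemma}. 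Your approach is more self-contained in that it stays within the machinery already developed in the paper (and makes the role of the ``$V$ has canonical singularities'' hypothesis transparent), at the cost of a little more geometric bookkeeping; the paper's approach is shorter because it outsources the cone calculation to \cite{SS} and uses the graded structure of local cohomology directly. Both are perfectly sound.
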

\begin{proof}
First we will show that Conjecture $\mathrm{B}_n$ implies Conjecture $\mathrm{A}_{n+1}$. 
Let $x \in X$ be an $(n+1)$-dimensional normal $\Q$-Gorenstein singularity defined over an algebraically closed field $k$ of characteristic zero such that $x$ is an isolated non-log-terminal point of $X$. 
If $x \in X$ is of dense $F$-pure type, then by \cite[Theorem 3.9]{HW}, it is log canonical. 
Conversely, suppose that  $x \in X$ is a log canonical singularity.  
Since $\mu:=\mu(x \in X) \le \dim X-1=n$, by Lemma \ref{n+1=>n}, Conjecture $\mathrm{B}_{\mu}$ holds true. 
It then follows from Theorem \ref{main result} that $x \in X$ is of dense $F$-pure type. 

Next we will prove that Conjecture $\mathrm{A}_{n+1}$ implies Conjecture $\mathrm{B}_{n}$. 
Let $V$ be an $n$-dimensional projective variety over an algebraically closed field $k$ of characteristic zero with only rational singularities such that $K_V \sim 0$. 
Take any ample Cartier divisor $D$ on $V$ and consider its section ring $R=R(V, D)=\bigoplus_{m \ge 0}H^0(V, \sO_V(mD))$. 
By \cite[Proposition 5.4]{SS}, the affine cone $\Spec R$ of $V$ has only quasi-Gorenstein log canonical singularities and its vertex is an isolated non-log-terminal point of $\Spec R$. 
It then follows from Conjecture $\mathrm{A}_{n+1}$ that given a model of $(V, D, R)$ over a finitely generated $\Z$-subalgebra $A$ of $k$, there exists a dense subset of closed points $S \subseteq \Spec A$ such that $\Spec R_s$ is $F$-pure for all $s \in S$. 
Note that after replacing $S$ by a smaller dense subset if necessary, we may assume that $R_s=R(V_s, D_s)$ for all $s \in S$. 
Since $\Spec R_s$ is $F$-pure, the natural Frobenius action on the local cohomology module $H^{n+1}_{\m_{R_s}}(R_s)$ is injective, where $\m_{R_s}=\bigoplus_{m \ge 1}H^0(V_s, \sO_{V_s}(mD_s))$ is the unique homogeneous maximal ideal of $R_s$.  
Then the Frobenius action on $H^n(V_s, \sO_{V_s})$ is also injective, because $H^n(V_s, \sO_{V_s})$ is the degree zero part of $H^{n+1}_{\m_{R_s}}(R_s)$. 
\end{proof}

Since Conjecture $\mathrm{B}_2$ is known to be true (see Lemma \ref{conj B2}), Conjecture $\mathrm{A}_3$ holds true.  
\begin{cor}\label{3-dim case}
Let $x \in X$ be a three-dimensional normal $\Q$-Gorenstein singularity defined over an algebraically closed field of characteristic zero such that $x$ is an isolated non-log-terminal point of $X$.  
Then $x \in X$ is log canonical if and only if it is of dense $F$-pure type.
\end{cor}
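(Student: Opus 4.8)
The statement is precisely Conjecture $\mathrm{A}_3$, so the plan is to deduce it from the results already established rather than to prove anything new. First I would dispose of the easy implication: if $x \in X$ is of dense $F$-pure type, then by \cite[Theorem 3.9]{HW} it is log canonical, and this direction uses nothing about the dimension.

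For the converse, suppose $x \in X$ is a three-dimensional normal $\Q$-Gorenstein log canonical singularity with $x$ an isolated non-log-terminal point. Since $x$ is an isolated non-log-terminal point of the log canonical $X$, it is a log canonical center, so the invariant $\mu := \mu(x \in X)$ is defined. I would then observe that $\mu \le \dim X - 1 = 2$: in the notation of Definition \ref{def1.4} (passing to an index one cover if necessary), $E$ is a divisor on the resolution $Y$, hence every stratum of $E$ has dimension at most $\dim Y - 1 = 2$, and in particular so does the minimal one mapping to $x$. By Lemma \ref{conj B2}, Conjecture $\mathrm{B}_2$ holds true, and therefore so does Conjecture $\mathrm{B}_\mu$. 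Theorem \ref{main result} then yields that $x \in X$ is of dense $F$-pure type. Equivalently, one may simply invoke Corollary \ref{equivalence of conjectures}, which identifies Conjecture $\mathrm{A}_3$ with Conjecture $\mathrm{B}_2$, together with Lemma \ref{conj B2}.

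There is no genuine obstacle at this stage; the corollary is a formal consequence of the Main Theorem and of the truth of Conjecture $\mathrm{B}_2$. The substantive work has already been carried out earlier: the minimal model program with scaling in Proposition \ref{MMP} and the reduction in Theorem \ref{main result} of the $F$-purity of $x \in X$ to the bijectivity of the Frobenius action on $H^{\mu}(V_s, \sO_{V_s})$ for the minimal log canonical center $V$, together with the appeal in Lemma \ref{conj B2} to the results of Ogus \cite{Og}, Bogomolov--Zarhin \cite{BZ} and Joshi--Rajan \cite{JR} on ordinarity of abelian and K3 surfaces after reduction modulo $p$. The only points requiring a line of care are the well-definedness of $\mu$ and the inequality $\mu \le 2$, both of which are immediate from the definitions.
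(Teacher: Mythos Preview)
Your proposal is correct and follows essentially the same approach as the paper: the corollary is deduced formally from Corollary~\ref{equivalence of conjectures} (equivalently, Theorem~\ref{main result} together with the bound $\mu \le \dim X - 1 = 2$) and Lemma~\ref{conj B2}, with the reverse implication supplied by \cite[Theorem 3.9]{HW}. The paper's own treatment is in fact just the single sentence preceding the corollary, so your write-up is a faithful (if slightly more detailed) unpacking of exactly that argument.
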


\section{Appendix:~A quick review of \cite{Fu1} and \cite{Fu2}}\label{sec4}

In this appendix, we quickly review the invariant $\mu$ and related topics in \cite{Fu1} and 
\cite{Fu2} for the reader's convenience. After \cite{Fu1} was written, 
the minimal model program has developed drastically (cf.~\cite{BCHM}). 
In \cite{Fu2}, we only treat isolated log canonical singularities. 
Here, we survey the basic properties of $\mu$ and some related results in the framework of \cite{Fu2}. 
For the details, see \cite{Fu1} and \cite{Fu2}. 
 
Let $X$ be a quasi-projective log canonical variety defined over an algebraically closed 
field $k$ of characteristic zero with index one. 
Assume that $x\in X$ is a log canonical center. 
Let $f:Y\to X$ be a projective birational morphism from a smooth variety $Y$ such that 
$$
K_Y=f^*K_X+F-E
$$
where $E$ and $F$ are effective Cartier divisors on $Y$ and have no common irreducible components. 
We further assume that 
$f^{-1}(x)$ and $\Supp (E+F)$ are simple normal crossing divisors on $Y$. 
Let $E=\sum _{i\in I}E_i$ be the irreducible decomposition. 
Note that $E$ is a reduced simple normal crossing divisor on $Y$. 
We put $$
J=\{i\in I\, |\, f(E_i)=x\}\subset I
$$ 
and $$G=\sum _{i\in J}E_i. 
$$
Then, by \cite[Proposition 8.2]{fujino-fund}, we obtain 
$$f_*\mathcal O_G\cong \kappa (x). $$
In particular, $G$ is connected. 
We apply a $(K_Y+E)$-minimal model program with scaling 
over $X$ (cf.~\cite{BCHM} and \cite[Section 4]{fujino-ssmmp}). 
Then we obtain a projective birational morphism 
$$
f':Y'\to X 
$$ 
such that $(Y', E')$ is a $\mathbb Q$-factorial dlt pair and that $K_{Y'}+E'=f'^*K_X$ 
where $E'$ is the pushforward of $E$ on $Y'$.  It is a dlt blow-up of $X$ 
(cf.~Lemma \ref{dlt blow-ups}). 
Note that each step of the minimal model program is an isomorphism 
at the generic point of any log canonical center of $(Y, E)$ because 
$$
K_Y+E=f^*K_X+F. 
$$ 
Therefore, we obtain 
$$
\mu(x\in X)=\min\{\dim W | \, W \ {\text{is a log canonical center of}} \ (Y', E')\  {\text{with}}\ 
f'(W)=x\}. 
$$
By the proof of \cite[Theorem 10.5 (iv)]{fujino-fund}, 
we have 
$$
f'_*\mathcal O_{G'}\cong \kappa (x)
$$ 
where $G'$ is the pushforward of $G$ on $Y'$. 
In particular, $G'$ is connected. 
By applying \cite[Proposition 3.3]{Fu2} to each irreducible component of $G'$, we can check that 
if $W$ is a minimal log canonical center of $(Y', E')$ with $f'(W)=x$ then $\dim W=\mu (x\in X)$. 
By this observation, every minimal stratum of $E$ which is mapped to $x$ by 
$f$ is $\mu(x\in X)$-dimensional and $\mu(x\in X)$ is independent of 
the choice of the resolution $f$ (cf.~\cite[4.11]{Fu2}), that is, $\mu(x\in X)$ is well-defined. 

Let $W_1$ and $W_2$ be any minimal log canonical centers of $(Y', E')$ such that 
$f'(W_1)=f'(W_2)=x$. Then 
we can check that $W_1$ is birationally equivalent to $W_2$ (cf.~\cite[Proposition 3.3]{Fu2}). 
Therefore, all the minimal stratum of $E$ mapped to $x$ by $f$ are birational each other. 
More precisely, we can take a common resolution 
\begin{equation*}
\xymatrix{ & W\ar[dl]_{\alpha_1} \ar[dr]^{\alpha_2}\\
W_1 \ar@{<-->}[rr]  & & W_2}
\end{equation*}
such that 
$\alpha_1^*K_{W_1}=\alpha_2^*K_{W_2}$ 
(cf.~\cite[Proposition 3.3]{Fu2}). 

By the adjunction formula for dlt pairs (cf.~\cite[Proposition 3.9.2]{fujino-what}), 
we can check that 
$$
K_W=(K_{Y'}+E')|_W\sim 0
$$ 
and that $W$ has only canonical Gorenstein singularities if 
$W$ is a minimal log canonical center of $(Y', E')$ with $f'(W)=x$. 

Let $V$ be any minimal stratum of $E$. 
Then we can prove that 
$$
H^\mu(V, \mathcal O_V)\overset {\delta}\cong H^{n-1}(E, \mathcal O_E)  
$$ 
when $f(E)=x$, equivalently, $x\in X$ is an isolated non-log-terminal point, where 
$\mu=\mu(x\in X)$ and $n=\dim X$. 
The isomorphism $\delta$ is a composition of connecting homomorphisms of suitable 
Mayer--Vietoris exact sequences.  
For the details, see \cite[Section 5]{Fu2}. 
Although we assume that the base field is $\mathbb C$ and 
use the theory of mixed Hodge structures in \cite{Fu2}, the above isomorphism holds over an 
arbitrary algebraically closed field $k$ of characteristic zero by the Lefschetz principle.

\end{document}